\title{Legendre's Singular Modulus}
\author{Mark B. Villarino\\
Escuela de Matem\'atica, Universidad de Costa Rica,\\
11501 San Jos\'e, Costa Rica}
\date{}
\theoremstyle{plain}
\newtheorem{thm}{Theorem}[section]   %% Theorem 3.1
\newtheorem{cor}[thm]{Corollary}     %% Corollary 3.2
\newtheorem{lemma}[thm]{Lemma}       %% Lemma 3.3
\theoremstyle{definition}
\newtheorem{defn}{Definition}[section] %% Definition 3.1
\numberwithin{equation}{section}
\def\section{\@startsection{section}{1}{\z@}{-3.5ex plus -1ex minus
			  -.2ex}{2.3ex plus .2ex}{\large\bf}}
\def\subsection{\@startsection{subsection}{2}{\z@}{-3.25ex plus -1ex
			  minus -.2ex}{1.5ex plus .2ex}{\normalsize\bf}}
\DeclareMathOperator{\cn}{cn}   %% Jacobi cn function
\DeclareMathOperator{\dn}{dn}   %% Jacobi dn function
\DeclareMathOperator{\sn}{sn}   %% Jacobi sn function
\renewcommand{\leq}{\leqslant}  %% (to save typing)
\newcommand{\xx}{\mathbf{x}}    %% vector x
\newcommand{\half}{{\mathchoice{\thalf}{\thalf}{\shalf}{\shalf}}}
\newcommand{\shalf}{{\scriptstyle\frac{1}{2}}} %% tiny fraction  1/2
\newcommand{\thalf}{\tfrac{1}{2}} %% small fraction  1/2
\newcommand{\word}[1]{\quad\mbox{#1}\quad} %% well-spaced words
\newcommand{\hideqed}{\renewcommand{\qed}{}} %% to suppress `\qed'
\begin{document}

\maketitle

\section{Introduction} % 1
\label{sec:intro}

Two characteristics of mathematics charm and delight most professional
mathematicians.

The first is its \emph{historical continuity}. For example, although
Euclid created his proofs of Pythagoras's theorem \cite[I.47]{Euclid}
and of the infinitude of primes \cite[IX.20]{Euclid} some 2300 years
ago, they remain as fresh, as compelling, and as beautiful today as
they were when he first wrote them down.

The second is the way that seemingly disparate areas of mathematics
reveal \emph{deep and unsuspected relationships}. For example the
number $\pi$ appears in a myriad distant parts of mathematics. One
might say that $\pi$ is \emph{ubiquitous}. A personal favorite of ours
is the fact, discovered by Dirichlet \cite[Thm.~332]{HardyW}, that
\emph{the probability that two integers taken at random are relatively
prime is $\dfrac{6}{\pi^2}$}. What on earth does $\pi$, the universal
ratio of the circumference to the diameter of any circle, have to do
with the common divisors of two integers taken at random? Dirichlet's
result shows that the relationship is profound. Indeed it is based on
Euler's solution to the Basel problem, i.e., that
$\sum_{n=1}^\infty \frac{1}{n^2} = \frac{\pi^2}{6}$, which, in turn,
is based on the fact that the non zero roots of the transcendental
equation $\sin x = 0$ are $\pm \pi$, $\pm 2\pi$, $\pm 3\pi$, \dots

Our present paper is devoted the unexpected and fascinating ubiquity
of Legendre's relatively unknown (third) \emph{singular modulus}
$k := \sin \frac{\pi}{12} = \frac{1}{2} \sqrt{2 - \sqrt{3}}$ (see the
definition below). These instances include:
\begin{itemize}
\item
Legendre's original proof of the \emph{first appearance} in the
history of mathematics of a singular modulus;
\item
Ramanujan's formula for the arc length of an ellipse of eccentricity
$\sin \frac{\pi}{12}$;
\item
the three-body choreography on a lemniscate.
\end{itemize}

We also briefly mention random walks on a cubic lattice and simple
pendulum renormalization.

These occurrences are familiar to most workers in these areas but not
to the general mathematical public at large. They deserve to be better
known since they display the two characteristics we listed above and
they exhibit quite beautiful mathematics. We hope that our paper makes
them easily available.

%%%%%%%%%%%%%%%%%%%%%%%%%%%%%%%%%%%%%%%%%%%%%%%%%%%

\section{Legendre's Singular Modulus}
\label{sec:Legendre}

In 1811, the famous french mathematician Adrien-Marie Legendre
published the following notable result \cite[p.~60]{Legendre}, which
we present in a form close to the spirit of Ramanujan.

\begin{thm} % 2.1
\label{th:Legendre}
If
$$
f(\alpha) := 1 + \biggl( \frac{1}{2} \biggr)^2 \alpha 
+ \biggl( \frac{1\cdot 3}{2\cdot 4} \biggr)^2 \alpha^2
+ \biggl( \frac{1\cdot 3\cdot 5}{2\cdot 4\cdot 6} \biggr)^2 \alpha^3
+\cdots
$$
and
$$
f(1 - \alpha) = \sqrt{3} \cdot f(\alpha),
$$
then
$$
\boxed{\alpha = \sin^2 \frac{\pi}{12} = \frac{1}{4}(2 - \sqrt{3}).}
$$
\end{thm}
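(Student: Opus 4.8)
\emph{Proof proposal.} The plan is to recognise the series $f$ as a complete elliptic integral and then to invoke the classical modular equation of degree $3$. Put $k:=\sqrt{\alpha}$ and let $k':=\sqrt{1-\alpha}$ be the complementary modulus. Then $f(\alpha)={}_2F_1\!\left(\tfrac12,\tfrac12;1;\alpha\right)=\tfrac{2}{\pi}K(k)$, where $K$ is the complete elliptic integral of the first kind, and likewise $f(1-\alpha)=\tfrac{2}{\pi}K(k')=\tfrac{2}{\pi}K'(k)$; hence the hypothesis $f(1-\alpha)=\sqrt3\,f(\alpha)$ is equivalent to
\[
\frac{K'(k)}{K(k)}=\sqrt3 .
\]
As a preliminary remark, such an $\alpha$ is unique and lies in $\left(0,\tfrac12\right)$: the ratio $g(\alpha):=f(1-\alpha)/f(\alpha)$ is continuous and strictly decreasing on $(0,1)$ with $g(0^{+})=+\infty$ (since $f(1)$ diverges) and $g(1^{-})=0$, so the value $\sqrt3$ is taken exactly once; and $\sqrt3>1$ forces $K'>K$, hence $k'>k$ and $\alpha<\tfrac12$. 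This last inequality will pick out the correct root of a quadratic below.

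The crucial point is that $\sqrt3$ is its own image under ``tripling''. Set $\ell:=k'$, so that $\ell^{2}=1-\alpha$ and
\[
\frac{K'(\ell)}{K(\ell)}=\frac{K(k)}{K'(k)}=\frac1{\sqrt3},\qquad\text{whence}\qquad \frac{K'(k)}{K(k)}=3\,\frac{K'(\ell)}{K(\ell)} .
\]
Thus $k$ and $\ell$ are linked by a modular transformation of degree $3$ and, at the same time, are complementary ($\ell=k'$, $\ell'=k$). Feeding this coincidence into the degree-$3$ modular equation, which in the variables $\alpha_k=k^{2}$, $\alpha_\ell=\ell^{2}$ reads
\[
(\alpha_k\alpha_\ell)^{1/4}+\bigl\{(1-\alpha_k)(1-\alpha_\ell)\bigr\}^{1/4}=1 ,
\]
and using $\alpha_\ell=1-\alpha_k=1-\alpha$, the two radicals become equal and the equation collapses to $2\{\alpha(1-\alpha)\}^{1/4}=1$, i.e.\ $\alpha(1-\alpha)=\tfrac1{16}$. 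Solving $\alpha^{2}-\alpha+\tfrac1{16}=0$ yields $\alpha=\tfrac14(2\pm\sqrt3)$, and $\alpha<\tfrac12$ selects $\alpha=\tfrac14(2-\sqrt3)$. Finally $\tfrac14(2-\sqrt3)=\tfrac12\bigl(1-\cos\tfrac{\pi}{6}\bigr)=\sin^{2}\tfrac{\pi}{12}$, which is the boxed formula.

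The genuinely substantial ingredient is the degree-$3$ modular equation used above; everything else — the identification of $f$ with $K$, the monotonicity remark, and the final quadratic — is routine. One may simply quote that equation from the classical theory of elliptic functions; alternatively, and this is the step truly ``in the spirit of Ramanujan'', it can be derived from scratch, for instance by writing $k=\theta_2^{2}/\theta_3^{2}$, $k'=\theta_4^{2}/\theta_3^{2}$ and matching the $q$- and $q^{3}$-expansions of the relevant theta constants, or by establishing a cubic transformation of the hypergeometric series $f$ itself. A heavier, conceptually different alternative would be to compute the singular value $k$ directly by complex multiplication at $\tau=i\sqrt3$ (using $j(i\sqrt3)=54000$) and recover $\alpha$ from $j$; the modular-equation route is shorter and is essentially Legendre's own.
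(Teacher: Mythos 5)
Your argument is correct, but it is a genuinely different route from the one in the paper. You translate the hypothesis into $K'(k)/K(k)=\sqrt3$, observe that the complementary modulus $\ell=k'$ then satisfies $K'(k)/K(k)=3\,K'(\ell)/K(\ell)$, and feed the pair $(k^2,\ell^2)=(\alpha,1-\alpha)$ into the degree-$3$ modular equation $(\alpha_k\alpha_\ell)^{1/4}+\{(1-\alpha_k)(1-\alpha_\ell)\}^{1/4}=1$, which collapses to $\alpha(1-\alpha)=\tfrac1{16}$; your monotonicity remark correctly pins down the root $\alpha=\tfrac14(2-\sqrt3)$ and also gives uniqueness. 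This is precisely the route the paper describes as the standard published one (``Jacobi's theory of the cubic transformation and the modular equation of degree three,'' cf.\ the reference to Cayley), and its cost is that the entire weight of the proof rests on the quoted modular equation, which you do not derive. The paper instead reproduces Legendre's own elementary argument: compute $R=\int_0^1(1-x^3)^{-2/3}\,dx$ in two different ways, using only rational/trigonometric substitutions together with Legendre's bisection formula and the trisection relation $F(\Phi)=K/3$ for the special moduli $\cos\frac{\pi}{12}$ and $\sin\frac{\pi}{12}$, to get $R$ as an explicit multiple of $K\bigl(\cos\frac{\pi}{12}\bigr)$ and also of $K\bigl(\sin\frac{\pi}{12}\bigr)$; equating the two evaluations yields $K'/K=\sqrt3$ at $k=\sin\frac{\pi}{12}$ directly, with no modular equation and nothing beyond first-year calculus plus the addition theorem for elliptic integrals. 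So your proof is shorter given classical machinery, while the paper's is self-contained and historically faithful. One small correction: your closing claim that the modular-equation route ``is essentially Legendre's own'' is not accurate — the cubic transformation and modular equation are due to Jacobi, later than 1811, and the paper's whole point is that Legendre's original proof avoids them.
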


We we note that the series converges for $-1\leq \alpha<1$ and that the quotient $\frac{f(1-\alpha)}{f(\alpha)}$ decreases monotonically from $\infty$ to $1$ as $\alpha$ increases from $0$ to $1$.  Thus there is a unique $\alpha$ such that $f(1 - \alpha) = \sqrt{3} \cdot f(\alpha)$ is true.

We now formulate Legendre's own statement \cite[p.~59]{Legendre}.  Let 
$0 \leq k \leq 1$ and
$$
K(k) := \int_0^{\pi/2} \frac{d\theta}{\sqrt{1 - k^2 \sin^2\theta}}
\word{and}  K' := K(\sqrt{1 - k^2}).
$$
Then, if we expand the integrand by the binomial theorem, integrate
term by term, and write $k^2$ for $\alpha$, we obtain Legendre's
original form.

\begin{thm} % 2.2
\label{th:original-form}
\begin{equation}
\label{eq:CM} % (2.1)
\boxed{\frac{K'(k)}{K(k)} = \sqrt{3} 
\implies k^2 = \sin^2 \frac{\pi}{12} = \frac{1}{4}(2 - \sqrt{3})}
\end{equation}
\end{thm}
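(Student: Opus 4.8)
\section*{Proof proposal}

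The plan is to reduce Theorem~\ref{th:original-form} to Theorem~\ref{th:Legendre}, which we may assume. The bridge is the classical identity
$$
K(k) = \frac{\pi}{2}\, f(k^2), \qquad 0 \le k < 1 .
$$
To establish it I would expand the integrand of $K(k)$ by the binomial theorem,
$$
\frac{1}{\sqrt{1 - k^2\sin^2\theta}} = \sum_{n=0}^{\infty} \frac{(2n)!}{4^n (n!)^2}\, k^{2n}\sin^{2n}\theta ,
$$
integrate term by term over $[0,\pi/2]$, and invoke Wallis's formula $\int_0^{\pi/2}\sin^{2n}\theta\,d\theta = \frac{\pi}{2}\cdot\frac{(2n)!}{4^n (n!)^2}$. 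Since $\frac{(2n)!}{4^n(n!)^2} = \frac{1\cdot 3\cdots(2n-1)}{2\cdot 4\cdots 2n}$, the $n$-th term contributes the square of this fraction times $k^{2n}$, which is precisely the coefficient of $\alpha^n$ in $f(\alpha)$; the series therefore collapses to $\frac{\pi}{2} f(k^2)$.

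Applying the same identity with $k$ replaced by the complementary modulus $k' := \sqrt{1-k^2}$ gives $K'(k) = K(k') = \frac{\pi}{2} f(1-k^2)$, where one uses that the series defining $f$ converges on $-1 \le \alpha < 1$ (as noted in the text) and that $1-k^2 \in [0,1)$. Dividing,
$$
\frac{K'(k)}{K(k)} = \frac{f(1-k^2)}{f(k^2)} .
$$
If the left-hand side equals $\sqrt{3}$, then, writing $\alpha := k^2$, we have $f(1-\alpha) = \sqrt{3}\, f(\alpha)$ with $\alpha \in (0,1)$, and Theorem~\ref{th:Legendre} forces $\alpha = \sin^2\frac{\pi}{12} = \frac{1}{4}(2-\sqrt 3)$, which is the assertion of Theorem~\ref{th:original-form}. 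Before dividing one should dispose of the endpoints: $K(1) = +\infty$, while $K(0) = \frac{\pi}{2}$ and $K'(0) = K(1) = +\infty$, so $K'/K = \sqrt{3}$ can hold only for some $k$ strictly between $0$ and $1$ --- exactly the range in which the term-by-term integration above is valid.

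The sole genuine analytic point, and thus the main obstacle, is justifying the interchange of summation and integration. This is routine: for fixed $k$ with $0 \le k < 1$ one has $k^2\sin^2\theta \le k^2 < 1$ uniformly in $\theta$, so the binomial series is majorized on $[0,\pi/2]$ by the convergent series $\sum_n \frac{(2n)!}{4^n(n!)^2} k^{2n} = (1-k^2)^{-1/2}$ and hence converges uniformly there, which legitimizes the term-by-term integration. Everything else is bookkeeping with the double-factorial coefficients together with a direct appeal to the already established Theorem~\ref{th:Legendre}.
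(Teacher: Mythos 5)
Your reduction of Theorem~\ref{th:original-form} to Theorem~\ref{th:Legendre} via the identity $K(k)=\frac{\pi}{2}f(k^2)$ is correct as far as it goes --- the binomial expansion, Wallis's formula, the uniform-convergence justification, and the endpoint discussion are all fine --- but within this paper it begs the question. Theorem~\ref{th:Legendre} is not an independently established result here: it is simply the series restatement of \eqref{eq:CM}, and the paper passes between the two formulations by exactly the term-by-term integration you perform (just in the opposite direction), in the sentence immediately preceding Theorem~\ref{th:original-form}. The only thing proved about either formulation before the proof of Theorem~\ref{th:original-form} is the remark that $f(1-\alpha)/f(\alpha)$ decreases monotonically from $\infty$ to $1$, which gives \emph{uniqueness} of the solution of $f(1-\alpha)=\sqrt{3}\,f(\alpha)$ but says nothing about its \emph{value}. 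So the phrase ``which we may assume'' is precisely the step you are not entitled to: you have assumed the entire content of the theorem and supplied only the routine dictionary between the integral form and the series form, which is not where the difficulty lies.

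The missing idea is an actual evaluation showing that $k=\sin\frac{\pi}{12}$ satisfies $K'(k)=\sqrt{3}\,K(k)$; combined with the monotonicity of $K'/K$ (equivalently of $f(1-\alpha)/f(\alpha)$) this yields the stated implication. The paper's proof --- Legendre's own --- obtains this by computing $R=\int_0^1 dx/(1-x^3)^{2/3}$ in two ways: Lemma~\ref{lm:Legendre-R-first} gives $R=\frac{4}{3\cdot 4^{1/3}3^{1/4}}\,K\bigl(\cos\frac{\pi}{12}\bigr)$, using Corollary~\ref{cr:Bowman-cases} together with the trisection result of Corollary~\ref{cr:Legendre-eqn} and the bisection Theorem~\ref{th:bisection}, while Lemma~\ref{lm:Legendre-R-second} gives $R=\frac{4\sqrt{3}}{3\cdot 4^{1/3}3^{1/4}}\,K\bigl(\sin\frac{\pi}{12}\bigr)$; equating the two yields $K\bigl(\cos\frac{\pi}{12}\bigr)=\sqrt{3}\,K\bigl(\sin\frac{\pi}{12}\bigr)$, i.e.\ \eqref{eq:CM}, since $\cos\frac{\pi}{12}$ is the complementary modulus of $\sin\frac{\pi}{12}$. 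Some argument of this kind (or one of the alternatives the paper cites: Jacobi's modular equation of degree three, or the complex-variable proof) is indispensable; the ``main obstacle'' you identify --- interchanging summation and integration --- is in fact the easy part.
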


The equation \eqref{eq:CM} is the first example of a \emph{singular
modulus} and we define the concept below.

\begin{defn} % 2.1
\label{df:singular-modulus}
If $N$ is a positive integer and the following equation holds:
\begin{equation}
\label{eq:CM1} % (2.2)
\frac{K'(k)}{K(k)} = \sqrt{N},
\end{equation}
then $k$ is called the \textbf{singular modulus} for~$N$.
\end{defn}

There is an enormous literature dealing with singular moduli. Today
the branch of mathematics which studies them is called \emph{complex
multiplication} \cite{Cox}. Legendre's relation \eqref{eq:CM} is the
first published explicit example of it.

The great German mathematician, David Hilbert, remarked that the theory of complex multiplication  "...which brings together number theory and analysis, is not only the most beautiful part of mathematics, but of all of science."~\cite{Reid}, (p. 200)  The theory was born when Abel~\cite{Abel}  published the following general theorem.
\begin{thm}
Whenever
$$\frac{K'}{K}=\frac{a+b\sqrt{N}}{c+d\sqrt{N}}$$
where $a,b,c,d$ are integers, $k$ is the root of an algebraic equation with integral coefficients which is solvable by radicals.
\end{thm}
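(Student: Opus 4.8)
The plan is to read the hypothesis as the assertion that the period ratio $\tau:=iK'/K$, a point of the upper half-plane $\mathbb{H}$, is an imaginary quadratic irrationality --- a \emph{point of complex multiplication} --- and then to invoke the two classical facts about such points: the value there of the elliptic modular function is algebraic, and it generates an abelian --- in particular, solvable --- extension of $\mathbb{Q}(\tau)$. Concretely, one records first that $k^{2}=\lambda(\tau)$ for the modular $\lambda$-function, and that Klein's absolute invariant satisfies $j=256\,(1-\lambda+\lambda^{2})^{3}\big/\bigl(\lambda^{2}(1-\lambda)^{2}\bigr)$, a rational relation; so it suffices to treat $j(\tau)$ and then pull $k$ back through this identity. (For a real modulus $0\le k\le 1$ the number $\tau=iK'/K$ is purely imaginary, and the displayed condition on $K'/K$ is to be understood, as in Abel's memoir, as the statement that $\tau$ satisfies a quadratic equation with integer coefficients.)

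For the algebraicity of $j(\tau)$ I would argue from the existence of \emph{modular equations}. Since $\tau$ is quadratic, the lattice $L:=\mathbb{Z}\tau+\mathbb{Z}$ admits a multiplier $\omega\notin\mathbb{Z}$ --- a complex number with $\omega L\subseteq L$ --- of index $n:=[L:\omega L]>1$; writing $\omega\tau=\alpha\tau+\beta$ and $\omega=\gamma\tau+\delta$ with $\alpha,\beta,\gamma,\delta\in\mathbb{Z}$ and $\alpha\delta-\beta\gamma=n$, division gives $(\alpha\tau+\beta)/(\gamma\tau+\delta)=\tau$, and a routine choice of $\omega$ makes $n$ a non-square. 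The classical modular polynomial $\Phi_{n}(X,Y)\in\mathbb{Z}[X,Y]$ vanishes at $\bigl(j\bigl((\alpha\tau+\beta)/(\gamma\tau+\delta)\bigr),j(\tau)\bigr)$, hence at $\bigl(j(\tau),j(\tau)\bigr)$; and since $\Phi_{n}(X,X)$ is a nonzero polynomial when $n$ is a non-square, $j(\tau)$ is a root of a nonzero polynomial over $\mathbb{Z}$. Feeding this through the $j$--$\lambda$ relation exhibits $k$ as a root of an algebraic equation with integer coefficients. (Equivalently one may simply quote the classical fact that $j$ of a CM point is an algebraic integer.)

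The deep step --- and the one I expect to be the real obstacle --- is solvability by radicals. Here I would lean on the main theorem of complex multiplication: for $\tau$ imaginary quadratic with $F:=\mathbb{Q}(\tau)$, the field $F\bigl(j(\tau)\bigr)$ is a ring class field of $F$, hence abelian over $F$, and adjoining the value there of a modular function of higher level --- in particular $\lambda(\tau)$, of level $2$ --- still gives a field abelian over $F$ (it sits inside a ray class field). Now an abelian extension is contained in a radical extension, by Kummer theory after adjoining the relevant roots of unity; so, climbing the tower $\mathbb{Q}\subset F\subset F(k^{2})\subset F(k)$ --- whose bottom step is quadratic, hence radical, and whose upper steps are abelian --- places $k$ inside a radical extension of $\mathbb{Q}$, i.e.\ $k$ is expressible by radicals. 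This is the path through Kronecker's \emph{Jugendtraum} and the class field theory of imaginary quadratic fields; Abel himself got there more concretely, by expressing the singular value through the division values of elliptic (originally lemniscatic) functions, whose division polynomials carry abelian Galois groups over $F\bigl(j(\tau)\bigr)$ --- the elliptic counterpart of Gauss's theory of the regular polygon. Either way, the heart of the matter is that the Galois action on the torsion of a CM elliptic curve is abelian, hence solvable by radicals; establishing that is the real work, the remainder being formal manipulation of modular functions.
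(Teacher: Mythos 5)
The paper does not prove this statement at all: it is quoted as Abel's theorem, and the only justification offered is the one-sentence remark following it, that the group of the equation is abelian and therefore solvable. So there is no argument of the paper for yours to clash with; your sketch is, in effect, a modern elaboration of exactly that remark, and as an outline it is sound: algebraicity of $j(\tau)$ at a CM point via the modular polynomial $\Phi_n$ with $n$ a nonsquare, transfer to $k$ through the rational relation between $j$ and $\lambda$, and solvability because singular values of modular functions lie in abelian (ring/ray class field) extensions of the imaginary quadratic field $F=\mathbb{Q}(\tau)$, with Kummer theory converting abelian into radical and the Galois closure of the radical tower giving solvability of the minimal polynomial.

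Two remarks. First, your parenthetical reading of the hypothesis is not a convenience but a necessity, and it deserves to be stated as such: literally, $K'/K=(a+b\sqrt N)/(c+d\sqrt N)$ only says $K'/K\in\mathbb{Q}(\sqrt N)$, and if the rationalized form has both its rational and its $\sqrt N$ parts nonzero (e.g.\ $K'/K=1+\sqrt2$), then $\tau=iK'/K$ is algebraic of degree $4$, not a CM point, and by Schneider's theorem $j(\tau)$, hence $k$, is transcendental --- so the statement as printed is false in those cases. The theorem is true precisely under your interpretation, $(K'/K)^2\in\mathbb{Q}$, i.e.\ $\tau$ imaginary quadratic, which is what Abel's condition amounts to. Second, the crux --- solvability --- is in your sketch outsourced to the main theorem of complex multiplication, a result far stronger than what is being proved; that is legitimate as a citation and matches the paper's gloss, but it is anachronistic relative to Abel, who argued through the transformation/division equations of elliptic functions and showed their groups abelian directly (the route the paper's surrounding discussion of the lemniscate and Kronecker's Jugendtraum alludes to), and it leaves the genuinely hard content unproved, as you yourself acknowledge. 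The small remaining steps you gloss --- that $\lambda$, a level-$2$ function with rational $q$-expansion, also takes CM values in an abelian extension of $F$; that $k=\sqrt{\lambda(\tau)}$ costs only one more radical; and that one root lying in a radical extension forces the minimal polynomial to be solvable by radicals --- are all standard and correct.
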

The group of the equation turns out to be abelian, which is why it is solvable by radicals.  The splitting field is therefore called an \emph{abelian extension}.
Thus the algebraic nature of the modulus is deeply related to its arithmetic properties.  Indeed, Gauss' theory of cyclotomy, in which the cyclotomic equation has an abelian group, and thus is solvable by radicals, lead Abel by analogy to a corresponding theory of using a ruler and compass to divide a \emph{lemniscate} into equal arcs, and thus the above theorem.    Kronecker suggested that since the roots of unity, which are values of the exponential function, generate abelian extensions of the rational number field, perhaps also\emph{ elliptic functions could generate abelian extensions of quadratic number fields}, his so-called "Jugentraum."  Years of work showed that Kronecker was basically right and this theory of "singular moduli," a term introduced by Kronecker, would create wonderful mathematics.  
A simple example is the Heegner/Stark proof that \emph{there are only nine imaginary quadratic number fields which admit unique factorization}, a problem posed by Gauss some 170 years earlier.  Or the transcendental proof of the\emph{ biquadratic} reciprocity theorem.  Finally it was natural to consider abelian extensions of arbitrary algebraic number fields which lead to today's profound theories called "class field theory."
It is amazing that transcendental functions lead to the solution of deep problems in discrete number theory and commutative algebra, \emph{all brought about by Legendre's original singular modulus and Abel's brilliant generalization.}

All published proofs of \eqref{eq:CM}, except for Legendre's own,
either use Jacobi's theory of the cubic transformation and the modular
equation of degree three \cite[p.~188]{Cayley}, or the complex
variable proof given in \cite[pp.~525--526]{WhittakerW} or
\cite[p.~92]{Bowman}.

Legendre's original proof, on the other hand, is completely elementary
and we have not been able to find a presentation of it in the
literature. Perhaps the reason is that some fifteen years later Jacobi
and Abel made Legendre's almost forty years of work obsolete by
studying the elliptic \emph{functions} instead of integrals and much
of his work was subsequently neglected. Still, it is a brilliant
\emph{tour de force} in first-year integral calculus and deserves to
be better known. Unfortunately it is seemingly unmotivated. Indeed, we
suggest that Legendre discovered his result by accident and then
developed his proof, which is a verification. Even so, we will present
it here. It is based on his theory of the \emph{bisection and
trisection} of elliptic integrals.

To this end we will first briefly review those elementary properties
of (real) elliptic integrals which Legendre uses in his proof. Then
will give a detailed presentation of Legendre's proof itself.

%%%%%%%%%%%%%%%%%%%%%%%%%%%%%%%%%%%%%%%%%%%%%%%

\subsection{Elliptic Integrals}

\begin{defn} % 2.2
\label{df:elliptic-integral}
Let $k$ be a real number such that $0 \leq k \leq 1$ and $\Phi$ a real
number such that $0 \leq \Phi \leq \frac{\pi}{2}$. Then
$$
F(\Phi,k) := \int_0^\Phi \frac{d\phi}{\sqrt{1 - k^2 \sin^2\phi}}
$$
is called an \textbf{elliptic integral} (of the first kind), $\Phi$ is
the \textbf{amplitude} and $k$ is the \textbf{modulus}. Usually we
write $F(\Phi)$ for brevity if the modulus is not important.
\end{defn}

Legendre proposed the following \emph{trisection} problem:
\textit{If $k$ is given, it is required to find the amplitude $\Phi$
which solves the following equation}:
\begin{equation}
\label{eq:tri} % (2.3)
F(\Phi) \equiv \int_0^\Phi \frac{d\phi}{\sqrt{1 - k^2 \sin^2\phi}}
= \frac{1}{3} \int_0^{\pi/2} \frac{d\psi}{\sqrt{1 - k^2 \sin^2\psi}}
\equiv \frac{K}{3}.
\end{equation}

Legendre solves it by proving (after Euler) that $F(\Phi)$ satisfies
an \emph{addition theorem} \cite[p.~20]{Legendre}: namely
\begin{equation}
\label{eq:aat1} % (2.4)
F(\Phi) + F(\Psi) = F(\mu),
\end{equation}
where
$$
\cos \mu 
= \cos\Phi \cos\Psi - \sin\Phi \sin\Psi \sqrt{1 - k^2 \sin^2\mu}.
$$

Taking $\mu = \frac{\pi}{2}$ and $F(\Psi) := F(\Phi) + F(\Phi)$ in
\eqref{eq:aat1}, after some algebra and trigonometric reduction
Legendre \cite[p.~29]{Legendre} finds the following result.

\begin{thm} % 2.3
\label{th:Legendre-eqn}
Let $k$ be fixed. Then $x := \sin\Phi$ in \eqref{eq:tri} is a root of
\begin{equation}
\label{eq:tri1} % (2.5)
k^2 x^4 - 2k^2 x^3 + 2x - 1 = 0.
\end{equation}
\end{thm}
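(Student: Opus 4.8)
The plan is to apply the addition theorem \eqref{eq:aat1} twice and then eliminate an auxiliary amplitude. Because the trisection condition \eqref{eq:tri} reads $F(\Phi)=K/3$, one has $2F(\Phi)=2K/3<K$, so there is a unique $\Psi\in[0,\tfrac{\pi}{2}]$ with $F(\Psi)=F(\Phi)+F(\Phi)$. For this pair, $F(\Phi)+F(\Psi)=3F(\Phi)=K=F(\tfrac{\pi}{2})$, so the addition theorem applied to $\Phi$ and $\Psi$ forces $\mu=\tfrac{\pi}{2}$. My goal is to eliminate $\Psi$ between the duplication relation and the $\mu=\tfrac{\pi}{2}$ relation, arriving at a single polynomial equation in $x:=\sin\Phi$.

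First I would record the duplication relation coming from $F(\Psi)=2F(\Phi)$, i.e.\ \eqref{eq:aat1} with equal amplitudes:
$$\cos\Psi=\cos^{2}\Phi-\sin^{2}\Phi\,\sqrt{1-k^{2}\sin^{2}\Psi}.$$
Next I would record the consequence of $\mu=\tfrac{\pi}{2}$: writing $k':=\sqrt{1-k^{2}}$ and using $\sqrt{1-k^{2}\sin^{2}\tfrac{\pi}{2}}=k'$, equation \eqref{eq:aat1} gives
$$0=\cos\Phi\cos\Psi-k'\sin\Phi\sin\Psi.$$
From this second relation I would solve for $\cos\Psi$ in terms of $\sin\Psi$ and $\Phi$, substitute into $\sin^{2}\Psi+\cos^{2}\Psi=1$, and obtain the rational expressions
$$\sin^{2}\Psi=\frac{1-x^{2}}{1-k^{2}x^{2}},\qquad \cos^{2}\Psi=\frac{k'^{2}x^{2}}{1-k^{2}x^{2}},\qquad 1-k^{2}\sin^{2}\Psi=\frac{k'^{2}}{1-k^{2}x^{2}}.$$

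Feeding these into the duplication relation, clearing the common denominator $\sqrt{1-k^{2}x^{2}}$, cancelling the factor $1+x$, and squaring once to remove the remaining radical leads (after expanding and cancelling the terms $x^{2}$ and $-k^{2}x^{2}$) precisely to $k^{2}x^{4}-2k^{2}x^{3}+2x-1=0$, which is \eqref{eq:tri1}. The routine but genuinely delicate point is the sign bookkeeping: each of $\cos\Psi$, $\sqrt{1-k^{2}\sin^{2}\Psi}$, and the square roots introduced along the way must be taken with the branch consistent with $0\le\Phi,\Psi\le\tfrac{\pi}{2}$ (so $\sin\Psi\ge 0$, $\cos\Psi\ge 0$), and one must verify that the extraneous roots created by cancelling $1+x$ (namely $x=-1$) and by squaring lie outside the admissible range and so cause no harm. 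Apart from this, the argument is pure first-year algebra and trigonometry, exactly in Legendre's spirit.
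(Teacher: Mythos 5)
Your proposal is correct and follows exactly the route the paper indicates: take $F(\Psi)=F(\Phi)+F(\Phi)$ and $\mu=\tfrac{\pi}{2}$ in the addition theorem \eqref{eq:aat1}, then eliminate $\Psi$ by the algebra you describe (the cancellation of $1+x$ and the single squaring do yield $k^2x^4-2k^2x^3+2x-1=0$). The paper itself leaves this ``algebra and trigonometric reduction'' to Legendre, so your write-up simply supplies the details of the same argument.
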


He then investigates two special cases.

\begin{cor} % 2.4
\label{cr:Legendre-eqn}
\begin{enumerate}
\item % (a)
If $k := \half \sqrt{2 + \sqrt{3}} = \cos \frac{\pi}{12}$, then
$$
\tan\Phi = \sqrt{\frac{2}{\sqrt{3}}}
$$
and vice versa.
\item % (b)
If $k := \half \sqrt{2 - \sqrt{3}} = \sin \frac{\pi}{12}$, then
$$
\cos\Phi = (2^{2/3} - 1) \sqrt{\frac{2 + \sqrt{3}}{\sqrt{3}}}
$$
and vice versa.
\end{enumerate}
\end{cor}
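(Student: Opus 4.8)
The plan is to run both halves of each equivalence through Theorem~\ref{th:Legendre-eqn}, which says that $x:=\sin\Phi$ for the trisection amplitude in \eqref{eq:tri} is a root of $q_k(x):=k^2x^4-2k^2x^3+2x-1$. Since $q_k(x)=0$ is \emph{linear} in $k^2$, I would first record its solved form
$$
k^2=\frac{2x-1}{x^3(2-x)} ,
$$
which disposes of the ``vice versa'' direction of both (a) and (b): one substitutes the asserted value of $\tan\Phi$ (in (a)), respectively $\cos\Phi$ (in (b)), computes the corresponding $x=\sin\Phi$, and checks that the right-hand side collapses to $\cos^2\frac{\pi}{12}=\frac14(2+\sqrt3)$, respectively $\sin^2\frac{\pi}{12}=\frac14(2-\sqrt3)$. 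For the forward direction one substitutes the given $k^2$ into $q_k$ and singles out the genuine trisection root; the only analytic input needed is that $0<\Phi<\frac{\pi}{2}$ — because $0<\tfrac{K}{3}<K=F(\tfrac{\pi}{2})$ and $F$ increases strictly — so the root we want is the one with $\sin\Phi\in(0,1)$.

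For part (a), setting $k^2=\cos^2\frac{\pi}{12}=\frac14(2+\sqrt3)$ turns $q_k(x)=0$ into $(2+\sqrt3)x^4-2(2+\sqrt3)x^3+8x-4=0$. I would verify that $x=\sqrt3-1$ is a root, divide out the factor $x-(\sqrt3-1)$, and check that the residual cubic over $\mathbb{Q}(\sqrt3)$ has no root in $(0,1)$ (it is in fact positive there); hence $\sin\Phi=\sqrt3-1$ is the trisection value. Then $\sin^2\Phi=4-2\sqrt3$ gives
$$
\tan^2\Phi=\frac{4-2\sqrt3}{1-(4-2\sqrt3)}=\frac{4-2\sqrt3}{2\sqrt3-3}=\frac{2}{\sqrt3} ,
$$
which is the claim, and feeding $x=\sqrt3-1$ back into $k^2=\dfrac{2x-1}{x^3(2-x)}$ returns $\tfrac14(2+\sqrt3)=\cos^2\frac{\pi}{12}$.

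Part (b), with $k^2=\sin^2\frac{\pi}{12}=\frac14(2-\sqrt3)$, is the substantive case: the trisection root $x_0$ of $(2-\sqrt3)x^4-2(2-\sqrt3)x^3+8x-4=0$ does \emph{not} lie in $\mathbb{Q}(\sqrt3)$. The quartic does have the root $x=-(1+\sqrt3)\in\mathbb{Q}(\sqrt3)$, but this is one of the parasitic branches thrown off by iterating the multivalued addition formula \eqref{eq:aat1}; after dividing it out one is left with an \emph{irreducible} cubic over $\mathbb{Q}(\sqrt3)$ whose only real root is the trisection value $x_0\in(0,1)$. Solving that cubic by Cardano's formula — its discriminant is negative, so the radicand inside Cardano is positive and honest real cube roots occur — yields $\sin\Phi=x_0$ as an expression in $\sqrt3$ and $2^{1/3}$; simplifying $1-x_0^2$ then exhibits it as the perfect square $(2^{2/3}-1)^2\cdot\frac{2+\sqrt3}{\sqrt3}$, which is precisely the asserted value of $\cos^2\Phi$. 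As before, the converse is the single substitution of $x=\sin\Phi$ into $k^2=\dfrac{2x-1}{x^3(2-x)}$.

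I expect the cube-root bookkeeping in part (b) to be the main obstacle: carrying Cardano's solution through the quadratic base field $\mathbb{Q}(\sqrt3)$ and then recognizing the tangled surds as the compact product $(2^{2/3}-1)\sqrt{(2+\sqrt3)/\sqrt3}$. A smaller but genuine point, already present in part (a), is certifying that the algebraic root one keeps really is the trisection amplitude and not a spurious root manufactured by the addition theorem; this is settled by the bound $0<\Phi<\frac{\pi}{2}$ together with a sign analysis of the leftover cubic on $(0,1)$.
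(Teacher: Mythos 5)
Your plan is essentially the proof the paper has in mind but omits: the paper simply remarks that the corollary follows from Theorem~\ref{th:Legendre-eqn} by elementary algebra and trigonometry, and your route — specializing the quartic \eqref{eq:tri1} to each $k$, factoring out the rational root ($x=\sqrt3-1$ in (a), $x=-(1+\sqrt3)$ in (b)), certifying via the sign/uniqueness analysis that the remaining root in $(0,1)$ really is $\sin\Phi$, and reading the converses off the solved form $k^2=\dfrac{2x-1}{x^3(2-x)}$ — is a correct and complete way to supply it. One practical remark: in part (b) you can skip Cardano entirely by taking $x^2=1-(2^{2/3}-1)^2\frac{2+\sqrt3}{\sqrt3}$, checking that this $x$ satisfies the quartic (rationalize by writing the quartic as $k^2x^3(x-2)=1-2x$ and squaring, so all arithmetic stays in $\mathbb{Q}(\sqrt3,2^{1/3})$ with a sign check), and then invoking your already-established fact that the cubic factor has a unique real root in $(0,1)$; this avoids the cube-root denesting you rightly flag as the main obstacle.
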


The proofs are elementary and only involve high-school algebra and
trigonometry.

Along the way, Legendre also solves the general \textit{bisection
problem} \cite[p.~25]{Legendre}. (We follow Beenakker
\cite{Beenakker}.)

\begin{thm} % 2.5
\label{th:bisection}
If
$$
\sin\Phi 
= \frac{\sin \frac{\theta}{2}}{\sqrt{\half + \half\Delta(\theta})},
\quad
\Delta(\theta) := \sqrt{1 - k^2 \sin^2\theta},
$$
then
$$
F(\Phi,k) = \half F(\theta,k).
$$
\end{thm}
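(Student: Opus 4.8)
The plan is to prove the bisection formula directly from the addition theorem \eqref{eq:aat1}, specializing it to the case $\Phi=\Psi$, so that $F(\Phi)+F(\Phi)=F(\theta)$ where $\theta$ is the amplitude determined by the addition rule. Concretely, I would set $\Psi=\Phi$ and $\mu=\theta$ in \eqref{eq:aat1}, which gives $F(\theta)=2F(\Phi)$, exactly the relation $F(\Phi)=\frac12 F(\theta)$ we want; the whole content of the theorem is then to extract from the algebraic side condition
\[
\cos\theta=\cos^2\Phi-\sin^2\Phi\,\sqrt{1-k^2\sin^2\theta}
\]
an explicit expression for $\sin\Phi$ in terms of $\theta$ and $k$, and to check it agrees with the stated formula $\sin\Phi=\sin\tfrac{\theta}{2}\big/\sqrt{\tfrac12+\tfrac12\Delta(\theta)}$.

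The key steps, in order, are as follows. First I would rewrite the side condition using $\cos^2\Phi=1-\sin^2\Phi$, obtaining $\cos\theta=1-\sin^2\Phi\,(1+\Delta(\theta))$, hence $\sin^2\Phi=\dfrac{1-\cos\theta}{1+\Delta(\theta)}$. Second, I would apply the half-angle identity $1-\cos\theta=2\sin^2\tfrac{\theta}{2}$ to the numerator, giving
\[
\sin^2\Phi=\frac{2\sin^2\frac{\theta}{2}}{1+\Delta(\theta)}
=\frac{\sin^2\frac{\theta}{2}}{\frac12+\frac12\Delta(\theta)},
\]
and taking the positive square root (legitimate since $0\leq\Phi\leq\frac{\pi}{2}$ and $0\leq\theta\leq\frac{\pi}{2}$ force both sides nonnegative) yields precisely the claimed formula. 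Third, I would verify the converse direction: given $\Phi$ defined by that formula one recovers the side condition, so that \eqref{eq:aat1} with $\Psi=\Phi$ applies and delivers $F(\theta,k)=2F(\Phi,k)$. One should also note that $\Delta(\theta)=\sqrt{1-k^2\sin^2\theta}$ matches the function $\sqrt{1-k^2\sin^2\mu}$ appearing with $\mu=\theta$ in the addition theorem, so no inconsistency arises.

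I do not anticipate a serious obstacle here; the proof is essentially a short trigonometric manipulation once the addition theorem is in hand. The only point requiring a little care is the branch/sign bookkeeping: one must confirm that the value of $\sin\Phi$ produced lies in $[0,1]$ so that $\Phi\in[0,\frac{\pi}{2}]$ is well-defined (equivalently, that $\sin^2\tfrac{\theta}{2}\leq\tfrac12+\tfrac12\Delta(\theta)$, which follows from $\Delta(\theta)\geq 1-2\sin^2\tfrac{\theta}{2}=\cos\theta$ together with $\Delta(\theta)\geq 0$), and that the positive square roots are the correct choices throughout—this is where the restriction of both amplitudes to the first quadrant is used. Everything else is the half-angle substitution $1-\cos\theta=2\sin^2\tfrac{\theta}{2}$ applied to the rearranged side condition.
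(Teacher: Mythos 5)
Your proof is correct. The paper states Theorem~\ref{th:bisection} without giving a proof (it is attributed to Legendre, with a pointer to Beenakker), so there is no in-text argument to compare against; your route --- taking $\Psi=\Phi$ and $\mu=\theta$ in the addition theorem \eqref{eq:aat1}, rewriting the side condition as $\cos\theta=\cos^2\Phi-\sin^2\Phi\,\Delta(\theta)=1-\sin^2\Phi\,(1+\Delta(\theta))$, and using $1-\cos\theta=2\sin^2\frac{\theta}{2}$ to get $\sin^2\Phi=\sin^2\frac{\theta}{2}\big/\bigl(\tfrac12+\tfrac12\Delta(\theta)\bigr)$ --- is exactly the classical Legendre derivation, and it parallels the way the paper itself extracts the trisection equation (Theorem~\ref{th:Legendre-eqn}) from the same addition theorem. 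You were right to flag the converse step (checking that the stated formula for $\sin\Phi$ reproduces the side condition, so that \eqref{eq:aat1} applies with $\mu=\theta$ and yields $2F(\Phi)=F(\theta)$) and the branch bookkeeping; the only further pedantic remark, at the same level of rigor as the paper, is that the implicit relation determines $\mu$ uniquely in $[0,\tfrac{\pi}{2}]$, since $\cos\mu+\sin^2\Phi\,\Delta(\mu)$ is strictly decreasing there, so $\mu=\theta$ really is the amplitude furnished by the addition theorem.
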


It is convenient to prove the following technical lemma (we follow
Bowman \cite[p.~91]{Bowman}).

\begin{lemma} % 2.6
\label{lm:Bowman}
If $T := 1 + 2t^2 \cos 2\alpha + t^4$, then
\begin{equation}
\label{eq:tl} % (2.6)
\int_0^x \frac{dt}{\sqrt{T}} = \int_{1/x}^\infty \frac{dt}{\sqrt{T}}
= \frac{1}{2} \int_0^{2x/(1 + x^2)} 
\frac{dy}{\sqrt{(1 - y^2)(1 - k^2 y^2)}}
\end{equation}
where $0 < x < 1$ and $0 < \alpha < \frac{\pi}{2}$ and
$k := \sin\alpha$.
\end{lemma}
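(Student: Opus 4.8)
The plan is to establish the two claimed equalities in \eqref{eq:tl} separately, starting from the rightmost integral (the standard Legendre form) and working backwards. First I would handle the second equality, $\int_{1/x}^\infty \frac{dt}{\sqrt{T}} = \frac{1}{2} \int_0^{2x/(1+x^2)} \frac{dy}{\sqrt{(1-y^2)(1-k^2 y^2)}}$, by observing that the quartic $T = 1 + 2t^2\cos 2\alpha + t^4 = (1+t^2)^2 - 4t^2\sin^2\alpha = (1+t^2)^2 - (2t\sin\alpha)^2$ factors nicely; the natural substitution is $y = \frac{2t}{1+t^2}$, which is the classical device for reducing an even quartic under the radical to the Legendre normal form. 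Under this substitution one computes $dy = \frac{2(1-t^2)}{(1+t^2)^2}\,dt$, and $1 - y^2 = \frac{(1-t^2)^2}{(1+t^2)^2}$, while $1 - k^2 y^2 = 1 - \frac{4t^2\sin^2\alpha}{(1+t^2)^2} = \frac{T}{(1+t^2)^2}$. Tracking signs carefully (note $1 - t^2$ changes sign at $t=1$, and the map $t\mapsto y$ is two-to-one with branches $[0,1]$ and $[1,\infty)$ both sweeping $y\in[0,1]$) gives the factor of $\tfrac12$ and the limits: on $t\in[1/x,\infty)$ we have $y$ running from $\frac{2/x}{1+1/x^2} = \frac{2x}{1+x^2}$ back to $0$, which after reversing orientation yields exactly the right-hand side.

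Next I would prove the first equality, $\int_0^x \frac{dt}{\sqrt{T}} = \int_{1/x}^\infty \frac{dt}{\sqrt{T}}$. Here the key observation is that $T$ is invariant (up to the expected Jacobian) under the inversion $t \mapsto 1/u$: writing $t = 1/u$ gives $dt = -du/u^2$ and $T(1/u) = 1 + 2u^{-2}\cos 2\alpha + u^{-4} = u^{-4}(u^4 + 2u^2\cos 2\alpha + 1) = u^{-4} T(u)$, so $\sqrt{T(1/u)} = u^{-2}\sqrt{T(u)}$ and hence $\frac{dt}{\sqrt{T(t)}} = \frac{-du/u^2}{u^{-2}\sqrt{T(u)}} = \frac{-du}{\sqrt{T(u)}}$. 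As $t$ runs over $(0,x)$, $u$ runs over $(1/x,\infty)$, and the minus sign is absorbed by the reversal of limits, giving the first equality immediately. This is the slickest of the three pieces and essentially free once the scaling property of $T$ is noticed.

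The main obstacle I expect is bookkeeping rather than ideas: in the substitution $y = 2t/(1+t^2)$ one must be scrupulous about the sign of $\sqrt{(1-t^2)^2} = |1-t^2|$ on each branch and about the orientation of the limits, since a careless treatment will drop or double the factor of $\tfrac12$. It helps to split $\int_0^\infty \frac{dt}{\sqrt T}$ (which by the first equality equals $2\int_0^x$ only in a limiting sense — more precisely $\int_0^1 = \int_1^\infty$ by inversion, and each maps onto the full $y$-interval $[0,1]$) and to check the normalization on the closed form: the full integral $\int_0^\infty \frac{dt}{\sqrt{1+2t^2\cos2\alpha+t^4}}$ should come out to $K(k)$ with $k=\sin\alpha$, which pins down all constants. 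Once the endpoint correspondence $t=1/x \leftrightarrow y = 2x/(1+x^2)$ and $t=\infty\leftrightarrow y=0$ is verified, the three expressions in \eqref{eq:tl} line up and the lemma follows.
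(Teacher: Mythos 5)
Your proof is correct and is essentially the paper's own argument: the first equality is obtained by the same inversion $t\mapsto 1/t$, and your substitution $y=2t/(1+t^2)$ is exactly the composite of the paper's two steps $t=\tan\theta$ followed by $y=\sin 2\theta$, so the sign/branch bookkeeping you describe reproduces the factor $\tfrac12$ and the limits in the same way.
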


\begin{proof}
That the first two integrals are equal follows by making the substitution
$t' := 1/t$ in either of them.

Let $u$ denote the first integral. Putting $t := \tan\theta$, followed
by $y := \sin 2\theta$ we find after some algebra that
$$
u = \int_0^{\arctan x} 
\frac{d\theta}{\sqrt{1 - \sin^2\alpha \sin^2 2\theta}}
= \frac{1}{2} \int_0^{2x/(1 + x^2)} 
\frac{dy}{\sqrt{(1 - y^2)(1 - k^2 y^2)}}\,.
\eqno \qed
$$
\hideqed
\end{proof}

\begin{cor} % 2.7
\label{cr:Bowman}
If $k := \sin\alpha$, then
$$
\int_0^\infty \frac{dt}{\sqrt{T}} = K(k).
$$
\end{cor}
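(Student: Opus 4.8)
The plan is to let $x \to 1^-$ in Lemma~\ref{lm:Bowman} and then recognize the limiting integral on the right as $K(k)$.

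First I would check that $\int_0^\infty dt/\sqrt{T}$ converges and that $\sqrt{T}$ has no interior zero. Writing $T = (1+t^2)^2 - 4t^2\sin^2\alpha$ and using $(1-t^2)^2 \geq 0$ gives $T \geq (1+t^2)^2\cos^2\alpha > 0$ for $0 < \alpha < \frac{\pi}{2}$, hence $0 < 1/\sqrt{T} \leq 1/\bigl[(1+t^2)\cos\alpha\bigr]$, and the latter integrates to $\pi/(2\cos\alpha)$ over $[0,\infty)$. So the improper integral converges absolutely and the integrand is continuous (in particular bounded) on $[0,1]$.

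Next I would pass to the limit $x \to 1^-$ in the chain of equalities~\eqref{eq:tl}. On the left, $\int_0^x dt/\sqrt{T} \to \int_0^1 dt/\sqrt{T}$ since the integrand is continuous on $[0,1]$; on the right, $2x/(1+x^2) \to 1$, and since $\bigl[(1-y^2)(1-k^2 y^2)\bigr]^{-1/2}$ has only an integrable singularity at $y=1$, we get $\tfrac12\int_0^{2x/(1+x^2)} \to \tfrac12\int_0^1$. This yields
$$
\int_0^1 \frac{dt}{\sqrt{T}} = \frac{1}{2} \int_0^1 \frac{dy}{\sqrt{(1-y^2)(1-k^2 y^2)}}\,.
$$

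Finally, the substitution $t' = 1/t$ (already invoked in Lemma~\ref{lm:Bowman}) gives $\int_1^\infty dt/\sqrt{T} = \int_0^1 dt/\sqrt{T}$, so
$$
\int_0^\infty \frac{dt}{\sqrt{T}} = 2\int_0^1 \frac{dt}{\sqrt{T}} = \int_0^1 \frac{dy}{\sqrt{(1-y^2)(1-k^2 y^2)}}\,,
$$
and the substitution $y = \sin\theta$ turns the right-hand side into $\int_0^{\pi/2} d\theta/\sqrt{1-k^2\sin^2\theta} = K(k)$, as desired. There is no real obstacle here; the only point needing a word of care is the interchange of limit and integration at the endpoints $t=1$ and $y=1$, which the elementary domination recorded above settles at once.
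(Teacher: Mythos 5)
Your proof is correct and follows essentially the same route as the paper: set $x=1$ (you take the limit $x\to 1^-$) in Lemma~\ref{lm:Bowman}, split $\int_0^\infty$ at $t=1$, and recognize each half as $\tfrac12 K(k)$. The extra care you take with convergence and the endpoint limit merely makes explicit what the paper's one-line proof leaves implicit.
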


\begin{proof}
Take $x = 1$ in \eqref{eq:tl}.  Then
\begin{align*}
\int_0^\infty \frac{dt}{\sqrt{T}}
&= \int_0^1 \frac{dt}{\sqrt{T}} + \int_1^\infty \frac{dt}{\sqrt{T}}
\\
&= \half K + \half K =  K.
\tag*{\qed}
\end{align*}
\hideqed
\end{proof}

Legendre \cite[p.~31]{Legendre} notes the following special cases of
this corollary.

\begin{cor} % 2.8
\label{cr:Bowman-cases}
\begin{alignat*}{2}
& (\mathrm{a}) \quad k=\sin(15^o)\Rightarrow
& \int_0^\infty \frac{dt}{\sqrt{1 + \sqrt{3}t^2 + t^4}}
&= K\biggl( \frac{1}{2} \sqrt{2 - \sqrt{3}} \,\biggr),
\\
& (\mathrm{b}) \quad k=\sin(75^o)\Rightarrow
& \int_0^\infty \frac{dt}{\sqrt{1 - \sqrt{3}t^2 + t^4}}
&= K\biggl( \frac{1}{2} \sqrt{2 + \sqrt{3}} \,\biggr).
\end{alignat*}
\end{cor}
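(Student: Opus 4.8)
The final statement is Corollary~\ref{cr:Bowman-cases}, which asks us to specialize Corollary~\ref{cr:Bowman} to the two angles $\alpha = 15^\circ$ and $\alpha = 75^\circ$. The plan is entirely computational: it is a matter of unwinding the definition $T := 1 + 2t^2\cos 2\alpha + t^4$ with these particular values of $\alpha$, and recognizing the resulting modulus $k = \sin\alpha$ in the closed form already recorded in Corollary~\ref{cr:Legendre-eqn}.

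First I would treat case (a). With $\alpha = 15^\circ$ we have $2\alpha = 30^\circ$, so $\cos 2\alpha = \cos 30^\circ = \tfrac{\sqrt3}{2}$, and hence $T = 1 + 2t^2\cdot\tfrac{\sqrt3}{2} + t^4 = 1 + \sqrt3\,t^2 + t^4$, which is exactly the quartic appearing under the radical in (a). By Corollary~\ref{cr:Bowman}, $\int_0^\infty dt/\sqrt{T} = K(\sin 15^\circ)$. It remains only to note the elementary half-angle evaluation $\sin 15^\circ = \sin\frac{\pi}{12} = \tfrac12\sqrt{2-\sqrt3}$ — the same value already used in part (b) of Corollary~\ref{cr:Legendre-eqn} — which gives $K(\sin 15^\circ) = K\bigl(\tfrac12\sqrt{2-\sqrt3}\bigr)$, as claimed.

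Case (b) is identical in structure. With $\alpha = 75^\circ$ we have $2\alpha = 150^\circ$ and $\cos 150^\circ = -\tfrac{\sqrt3}{2}$, so $T = 1 - \sqrt3\,t^2 + t^4$, matching the integrand of (b); Corollary~\ref{cr:Bowman} then yields $\int_0^\infty dt/\sqrt{T} = K(\sin 75^\circ)$, and $\sin 75^\circ = \cos 15^\circ = \tfrac12\sqrt{2+\sqrt3}$ — the value from part (a) of Corollary~\ref{cr:Legendre-eqn} — completes the identification. One should check the hypothesis $0 < \alpha < \tfrac{\pi}{2}$ of Lemma~\ref{lm:Bowman} (hence of Corollary~\ref{cr:Bowman}) is satisfied in both cases, which it plainly is.

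There is essentially no obstacle here: the only "work" is the trigonometric bookkeeping $\cos 30^\circ = \tfrac{\sqrt3}{2}$, $\cos 150^\circ = -\tfrac{\sqrt3}{2}$, and the surd forms of $\sin 15^\circ$, $\sin 75^\circ$, all of which are standard. The mild subtlety worth a sentence is simply that one must keep straight that the quartic $1 + \sqrt3\,t^2 + t^4$ pairs with the \emph{smaller} modulus $\tfrac12\sqrt{2-\sqrt3}$ (because a larger $\cos 2\alpha$ corresponds to a smaller $\alpha$), and the minus-sign quartic with the larger one; the closed forms in Corollary~\ref{cr:Legendre-eqn} make this automatic.
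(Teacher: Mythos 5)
Your proposal is correct and is exactly the argument the paper intends: the corollary is a direct specialization of Corollary~\ref{cr:Bowman} with $\alpha=15^\circ$ and $\alpha=75^\circ$, using $\cos 30^\circ=\tfrac{\sqrt3}{2}$, $\cos 150^\circ=-\tfrac{\sqrt3}{2}$, and the standard surd values $\sin 15^\circ=\tfrac12\sqrt{2-\sqrt3}$, $\sin 75^\circ=\tfrac12\sqrt{2+\sqrt3}$. The paper leaves this verification implicit, so there is nothing to add.
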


%%%%%%%%%%%%%%%%%%%%%%%%%%%%%%%%%%%%%%%%%%%

\subsection{Legendre's original proof}

We will closely follow Legendre \cite[pp.~59--60]{Legendre} except for
one step. He computes the integral
\begin{equation}
\label{eq:R} % (2.7)
R := \int_0^1 \frac{dx}{(1 - x^3)^{2/3}}
\end{equation}
in two ways. Then he equates the two expressions and 
obtains~\eqref{eq:CM}.

\begin{lemma} % 2.9
\label{lm:Legendre-R-first}
On the one hand,
$$
R = \frac{4}{3\cdot 4^{1/3} 3^{1/4}}
K\Biggl( \sqrt{\frac{2 + \sqrt{3}}{4}} \Biggr).
$$
% where $k^2 = \frac{2 + \sqrt{3}}{4}$.
\end{lemma}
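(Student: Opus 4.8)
The plan is to reduce $R$ to a $T$-type integral of the form appearing in Lemma~\ref{lm:Bowman} and Corollary~\ref{cr:Bowman}, and then to read off the modulus. First I would factor the cubic $1-x^3 = (1-x)(1+x+x^2)$, so that
$$
R = \int_0^1 \frac{dx}{(1-x)^{2/3}(1+x+x^2)^{2/3}}.
$$
The presence of the fractional power $2/3$ suggests the substitution that linearises the branch point at $x=1$: set $1-x^3 = $ (something)$^3$, or more directly substitute $x^3 = 1 - v^3$ type relations. A cleaner route is Legendre's own device: put $x = \dfrac{1-u}{1+u}$ or a similar M\"obius-type change of variable chosen so that $1-x^3$ becomes a perfect cube times a quartic in $u$. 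The aim is to arrive, after simplification, at an integral of the shape $\int_0^\infty \dfrac{dt}{\sqrt{1 + 2t^2\cos 2\alpha + t^4}}$ for a specific $\alpha$, up to an algebraic constant factor pulled out front.

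Next, once the integral is in the form $\text{const} \cdot \int_0^\infty \dfrac{dt}{\sqrt{1 + 2t^2\cos 2\alpha + t^4}}$, Corollary~\ref{cr:Bowman} immediately gives that this equals $\text{const}\cdot K(\sin\alpha)$. I expect the relevant angle to come out as $\alpha$ with $\cos 2\alpha = $ something involving $\sqrt{3}$; indeed, to match the stated answer we need $\sin^2\alpha = \dfrac{2+\sqrt{3}}{4}$, i.e. $\cos 2\alpha = 1 - 2\sin^2\alpha = -\dfrac{\sqrt 3}{2}$, so $2\alpha = 150^\circ$ and $\alpha = 75^\circ$. This is exactly case (b) of Corollary~\ref{cr:Bowman-cases}: $\int_0^\infty \dfrac{dt}{\sqrt{1 - \sqrt 3\, t^2 + t^4}} = K\!\left(\tfrac12\sqrt{2+\sqrt 3}\right)$. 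So the substitution should be engineered to transform $R$ precisely into a multiple of $\int_0^\infty \dfrac{dt}{\sqrt{1-\sqrt 3\,t^2+t^4}}$.

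The constant-chasing is the crux, so I would keep careful track of it. After the change of variable $x\mapsto t$, the Jacobian together with the leftover algebraic factors must collapse to the pure number $\dfrac{4}{3\cdot 4^{1/3}3^{1/4}}$. The cube roots $4^{1/3}$ and $4/3$ will come from the $(\cdot)^{2/3}$ exponent (powers of $2$ and $3$ released when $1+x+x^2$ or $1-x^3$ is evaluated at the endpoints or rewritten as a cube), while the $3^{1/4}$ is the telltale sign of the $\sqrt{1-\sqrt 3\,t^2+t^4}$ normalisation, since $1-\sqrt3\,t^2+t^4 = (t^2-\tfrac{\sqrt3}{2})^2 + \tfrac14$ and rescaling $t$ to bring this to the symmetric form $1+2s^2\cos2\alpha+s^4$ with unit constant term introduces a factor $3^{1/4}$ or its reciprocal. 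Concretely I would: (i) perform the substitution, (ii) simplify the radicand to $c\,(1 - \sqrt3\, t^2 + t^4)$ for an explicit constant $c$, (iii) pull $\sqrt c$ out, (iv) invoke Corollary~\ref{cr:Bowman-cases}(b), and (v) verify $\sqrt c$ times any boundary factors equals $\dfrac{4}{3\cdot 4^{1/3}3^{1/4}}$.

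The main obstacle I anticipate is precisely finding the right substitution and then not dropping or mis-tracking powers of $2$ and $3$ (and signs) through the fractional exponents; the structure of the argument is forced by Corollary~\ref{cr:Bowman-cases}, but matching the explicit constant $\dfrac{4}{3\cdot 4^{1/3}\,3^{1/4}}$ exactly requires scrupulous bookkeeping of the Jacobian and of the endpoint contributions from $(1-x)^{-2/3}$.
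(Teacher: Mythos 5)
Your plan correctly identifies Corollary~\ref{cr:Bowman-cases}(b) as the target, but it has a genuine gap: you assume a single substitution will carry $R$ onto a constant multiple of the \emph{complete} integral $\int_0^\infty \frac{dt}{\sqrt{1-\sqrt{3}\,t^2+t^4}}$, reducing everything to Jacobian bookkeeping, and you never address what happens to the range of integration. The substitutions that actually work (Legendre's own: first $(x/y)^{3/2}:=1-x^3$, giving $R=\int_0^\infty \frac{dy}{\sqrt{4y^3+1}}$, then $4^{1/3}y=:z^2-1$) land on $\frac{2}{4^{1/3}}\int_1^\infty \frac{dz}{\sqrt{z^4-3z^2+3}}$, an integral over $[1,\infty)$, not $[0,\infty)$. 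After the rescaling $z=3^{1/4}t$ one obtains the complete integral \emph{minus} an incomplete remainder $R_1=\int_0^{3^{-1/4}}\frac{dt}{\sqrt{t^4-\sqrt{3}\,t^2+1}}$, and the whole content of the lemma is the evaluation of that remainder: putting $t=\tan\theta$ turns it into an incomplete elliptic integral of modulus $\frac12\sqrt{2+\sqrt{3}}$, and one needs the bisection theorem (Theorem~\ref{th:bisection}) together with the trisection equation (Theorem~\ref{th:Legendre-eqn} and Corollary~\ref{cr:Legendre-eqn}(a)) to recognize it as exactly $K/3$.

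This is also where the constant really comes from: the rational factor $\tfrac43$ in $\frac{4}{3\cdot4^{1/3}3^{1/4}}$ arises as $K-\tfrac{K}{3}=\tfrac23K$ multiplied by the prefactor $\frac{2}{4^{1/3}3^{1/4}}$, i.e.\ from the trisection, not from the Jacobian of any rescaling (rescalings only release the $4^{1/3}$ and $3^{1/4}$). Your hope that a M\"obius-type change of variable does the job in one stroke is essentially the direct change-of-variable proof of the identity \eqref{eq:id}, which the paper explicitly flags as something that ``seems quite difficult.'' Without the incomplete-integral remainder and its identification with $K/3$, your argument does not close.
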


\begin{proof}
First make the change of variable
$$
\biggl( \frac{x}{y} \biggr)^{3/2} := 1 - x^3,
$$
and $R$ becomes
$$
R = \int_0^\infty \frac{dy}{\sqrt{4y^3 + 1}}\,.
$$

Then make the change of variable 
$$
4^{1/3} y =: z^2 - 1
$$
which transforms $R$ into
\begin{equation}
\label{eq:R1} % (2.8)
R = \frac{2}{4^{1/3}} \int_1^\infty \frac{dz}{\sqrt{z^4 - 3z^2 + 3}}.
\end{equation}
At this point Legendre makes a complicated change of variable which
obscures what is going on. So we ask the reader's indulgence as we
depart slightly from his development. We write
\begin{equation}
\label{eq:R2} % (2.9)
R = \frac{2}{4^{1/3}} \int_0^\infty \frac{dz}{\sqrt{z^4 - 3z^2 + 3}}
- \frac{2}{4^{1/3}} \int_0^1 \frac{dz}{\sqrt{z^4 - 3z^2 + 3}}
\end{equation}
and we make the change of variable $3^{1/4} t := z$ to obtain
\begin{equation}
\label{eq:R4} % (2.10)
R = \frac{2}{4^{1/3} 3^{1/4}} \int_0^\infty 
\frac{dt}{\sqrt{t^4 - \sqrt{3}t^2 + 1}}
- \frac{2}{4^{1/3} 3^{1/4}} \int_0^{3^{-1/4}}
\frac{dt}{\sqrt{t^4 - \sqrt{3}t^2 + 1}}\,.
\end{equation}

Then by Corollary~\ref{cr:Bowman-cases}(b), 
\begin{equation}
\label{eq:R3} % (2.11)
R = \frac{2}{4^{1/3} 3^{1/4}} 
K\biggl( \frac{1}{2} \sqrt{2 + \sqrt{3}} \biggr)
- \frac{2}{4^{1/3} 3^{1/4}} \int_0^{3^{-1/4}}
\frac{dt}{\sqrt{t^4 - \sqrt{3}t^2 + 1}}\,.
\end{equation}
% where the modulus is $k^2 = \frac{2 + \sqrt{3}}{4}$.

We let $R_1$ denote the integral on the right hand side of \eqref{eq:R3},:
$$
R_1 := \int_0^{3^{-1/4}} \frac{dt}{\sqrt{t^4 - \sqrt{3}t^2 + 1}}
$$

Set $t =: \tan\theta$. Then
\begin{equation}
\label{eq:R5} % (2.12)
R_1 = \int_0^{\arctan 3^{-1/4}} 
\frac{d\theta}{\sqrt{1 - \frac{2 + \sqrt{3}}{4} \sin^2 2\theta}}
\end{equation}

We now apply Theorem~\ref{th:bisection} with
$k^2 = \frac{2 + \sqrt{3}}{4}$ and $\theta := 2\arctan 3^{-1/4}$ and
find that 
\begin{equation}
\label{eq:sinphi} % (2.13)
\sin\Phi = \frac{2\sin \frac{\theta}{2}}
{\sqrt{\sqrt{4 - (\sqrt{3} + 2) \sin^2\theta} + 2}} = \sqrt{3} - 1,
\end{equation}
and therefore
$$
\Phi = \arcsin (\sqrt{3} - 1) 
= \arctan \frac{\sqrt{2}}{\sqrt[4]{3}}\,;
$$
and we conclude that
\begin{equation}
\label{eq:id} % (2.14)
\int_0^{\arctan(1/\sqrt[4]{3})}
\frac{d\theta}{\sqrt{1 - \frac{2 + \sqrt{3}}{4}\sin^2 2\theta}}
= \int_0^{\arctan(\sqrt{2}/\sqrt[4]{3})}
\frac{d\phi}{\sqrt{1 - \frac{2 + \sqrt{3}}{4} \sin^2\phi}}\,.
\end{equation}

But Corollary~\ref{cr:Legendre-eqn} says that 
$R_1 = F(\Phi) = \frac{K}{3}$. Therefore, by~\eqref{eq:R3}:
\begin{align*}
R &= \frac{2}{4^{1/3} 3^{1/4}}
K\biggl( \frac{1}{2} \sqrt{2 + \sqrt{3}} \,\biggr)
- \frac{2}{4^{1/3} 3^{1/4}} \frac{K(\half \sqrt{2 + \sqrt{3}})}{3}
\\
&= \frac{4}{3\cdot 4^{1/3} 3^{1/4}}
K\biggl( \frac{1}{2} \sqrt{2 + \sqrt{3}} \,\biggr).
\qedhere
\end{align*}
\end{proof}

We encourage the reader to find a direct change of variable proof of
the identity~\eqref{eq:id}, something that seems quite difficult.

\begin{lemma} % (2.10)
\label{lm:Legendre-R-second}
On the other hand,
$$
R = \frac{4\sqrt{3}}{3\cdot 4^{1/3} 3^{1/4}}\,
K\Biggl( \sqrt{\frac{2 - \sqrt{3}}{4}} \,\Biggr).
$$
%where $k^2 = \frac{2-\sqrt{3}}{4}$.
\end{lemma}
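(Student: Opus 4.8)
The plan is to evaluate $R$ a second time by running, almost verbatim, the argument of Lemma~\ref{lm:Legendre-R-first}, but steering every step toward the modulus $\tfrac12\sqrt{2-\sqrt3}=\sin\frac{\pi}{12}$ rather than toward $\tfrac12\sqrt{2+\sqrt3}=\cos\frac{\pi}{12}$; concretely, one replaces the use of Corollary~\ref{cr:Bowman-cases}(b) by Corollary~\ref{cr:Bowman-cases}(a), and the use of Corollary~\ref{cr:Legendre-eqn}(a) by Corollary~\ref{cr:Legendre-eqn}(b). The skeleton has four steps: (i) a change of variable turning $R$ (see~\eqref{eq:R}) into $c\,\bigl[\int_0^\infty - \int_0^b\bigr]\,dt/\sqrt{t^4+\sqrt3\,t^2+1}$ for a suitable constant $c$ and endpoint $b$; (ii) evaluating $\int_0^\infty dt/\sqrt{t^4+\sqrt3\,t^2+1}$; (iii) evaluating the partial integral $\int_0^b dt/\sqrt{t^4+\sqrt3\,t^2+1}$; (iv) collecting constants. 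For the result to close to the asserted $\tfrac{4\sqrt3}{3\cdot 4^{1/3}3^{1/4}}\,K(\sqrt{\tfrac{2-\sqrt3}{4}})$, step (iv) will need $\tfrac23 c=\tfrac{4\sqrt3}{3\cdot 4^{1/3}3^{1/4}}$, i.e.\ $c=\tfrac{2\sqrt3}{4^{1/3}3^{1/4}}$ --- a factor $\sqrt3$ larger than the constant in~\eqref{eq:R4} --- and by analogy with Lemma~\ref{lm:Legendre-R-first} I expect exactly this to emerge.

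Step (i) is the delicate one. I would again begin from $R=\int_0^\infty dy/\sqrt{4y^3+1}$, but then apply a substitution designed so that the quartic under the radical becomes $t^4+\sqrt3\,t^2+1$ rather than the quartic $z^4-3z^2+3$ produced in~\eqref{eq:R1} (note the opposite sign of the middle coefficient). These two quartics are not related by any rescaling $z=\lambda t$: they carry the complementary moduli $\cos\frac{\pi}{12}$ and $\sin\frac{\pi}{12}$, so a genuinely different reduction of $R$ is needed here --- a fractional-linear (M\"obius) substitution, or a cube-root substitution producing a different quartic (for instance $y^4+4y$, which $R$ reaches via $y=(1-x^3)^{2/3}/x$) followed by a further reduction, in either case redistributing the branch points appropriately. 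Once such a substitution has put $R$ in the form $c\,\bigl[\int_0^\infty - \int_0^b\bigr]\,dt/\sqrt{t^4+\sqrt3\,t^2+1}$, step (ii) is immediate, since $\int_0^\infty dt/\sqrt{t^4+\sqrt3\,t^2+1}=K(\tfrac12\sqrt{2-\sqrt3})$ by Corollary~\ref{cr:Bowman-cases}(a).

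For step (iii), set $R_2:=\int_0^b dt/\sqrt{t^4+\sqrt3\,t^2+1}$ and put $t=\tan\theta$. Exactly as in the passage from~\eqref{eq:R1} to~\eqref{eq:R5}, but with $\tfrac{2+\sqrt3}{4}$ replaced by $\tfrac{2-\sqrt3}{4}=\sin^2\frac{\pi}{12}$, this turns $R_2$ into $\int_0^{\arctan b} d\theta/\sqrt{1-\tfrac{2-\sqrt3}{4}\sin^2 2\theta}$, which after $\psi=2\theta$ equals $\tfrac12 F(2\arctan b,\ \tfrac12\sqrt{2-\sqrt3})$. Now apply the bisection Theorem~\ref{th:bisection} with $k^2=\tfrac{2-\sqrt3}{4}$ and $\theta=2\arctan b$; this produces an amplitude $\Phi$ whose sine, and hence cosine, is explicitly computable from the formula there. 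The claim to be verified is that this $\Phi$ is precisely the one in Corollary~\ref{cr:Legendre-eqn}(b), namely $\cos\Phi=(2^{2/3}-1)\sqrt{\tfrac{2+\sqrt3}{\sqrt3}}$. Granting this, Corollary~\ref{cr:Legendre-eqn}(b) together with the trisection equation~\eqref{eq:tri} gives $R_2=F(\Phi)=\tfrac13 K(\tfrac12\sqrt{2-\sqrt3})$, so that $R=c\,(1-\tfrac13)\,K(\tfrac12\sqrt{2-\sqrt3})=\tfrac23 c\,K(\tfrac12\sqrt{2-\sqrt3})$, which with $c=\tfrac{2\sqrt3}{4^{1/3}3^{1/4}}$ is the asserted value.

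I expect the main obstacle to be step (i) together with the amplitude check in step (iii). For (i), one must actually locate the correct change of variable, and, because the target quartic carries the complementary modulus, it cannot be a harmless rescaling of~\eqref{eq:R1}; finding the ``right'' M\"obius-type substitution by hand is where the creativity lies (Legendre's own substitution here is likely, as with the companion Lemma~\ref{lm:Legendre-R-first}, a complicated one). For (iii), the trigonometric and radical reduction is heavier than in Lemma~\ref{lm:Legendre-R-first}: one must simplify the amplitude emerging from the bisection down to the cube-root expression $(2^{2/3}-1)\sqrt{\tfrac{2+\sqrt3}{\sqrt3}}$ and check its consistency with the quartic~\eqref{eq:tri1} at $k=\sin\frac{\pi}{12}$ --- the cube root entering precisely because $2^{2/3}$ is built into Corollary~\ref{cr:Legendre-eqn}(b). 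Finally, with Lemma~\ref{lm:Legendre-R-first} and this lemma both in hand, equating the two evaluations of $R$ and cancelling the common factor $\tfrac{4}{3\cdot 4^{1/3}3^{1/4}}$ yields $K(\tfrac12\sqrt{2+\sqrt3})=\sqrt3\,K(\tfrac12\sqrt{2-\sqrt3})$, i.e.\ $K'(\sin\frac{\pi}{12})=\sqrt3\,K(\sin\frac{\pi}{12})$, which is Theorem~\ref{th:original-form}.
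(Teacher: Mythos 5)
There is a genuine gap, and it sits exactly where you flag it: step (i). The entire content of this lemma, as opposed to Lemma~\ref{lm:Legendre-R-first}, is the extra factor $\sqrt{3}$ and the appearance of the \emph{complementary} quartic, and both come from a specific change of variable that you never produce --- you only argue that some M\"obius-type or cube-root substitution ``should'' exist and that, by analogy, the constant must come out as $c=\tfrac{2\sqrt3}{4^{1/3}3^{1/4}}$. Until that substitution is exhibited, nothing is proved: the $\sqrt3$ you are ``expecting to emerge'' is precisely the singular-modulus relation in disguise, so assuming it is close to circular. Moreover, the gap propagates into your step (iii): the endpoint $b$ of the partial integral is determined by the unknown substitution, and without an explicit $b$ you cannot carry out the bisection step, identify the amplitude with the one in Corollary~\ref{cr:Legendre-eqn}(b) (this is where the $2^{2/3}$ comes from), and conclude $R_2=\tfrac13 K$. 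The paper resolves step (i) not by massaging $\int_0^\infty dy/\sqrt{4y^3+1}$, as you propose, but by returning to the original integral \eqref{eq:R} and substituting $\bigl(1-\tfrac{x}{y}\bigr)^3:=1-x^3$, which gives $R=\sqrt3\int_1^\infty dy/\sqrt{4y^3-1}$ (note the sign change in the cubic and the lower limit $1$, and note that the $\sqrt3$ appears here, at the very first step). Then $4^{1/3}y=:z^2+1$ yields $\tfrac{2\sqrt3}{4^{1/3}}\int_{\sqrt{4^{1/3}-1}}^{\infty} dz/\sqrt{z^4+3z^2+3}$, and the rescaling $z=3^{1/4}t$ produces your target quartic $t^4+\sqrt3\,t^2+1$, the constant $c=\tfrac{2\sqrt3}{4^{1/3}3^{1/4}}$, and the explicit endpoint $b=\sqrt{4^{1/3}-1}\,/\,3^{1/4}$ --- whose $4^{1/3}$ is exactly what feeds the cube root in Corollary~\ref{cr:Legendre-eqn}(b).

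Your remaining skeleton (steps (ii)--(iv): Corollary~\ref{cr:Bowman-cases}(a) for the full integral, $t=\tan\theta$ plus Theorem~\ref{th:bisection} and Corollary~\ref{cr:Legendre-eqn}(b) to identify the partial integral with $F(\Phi)=\tfrac13K$, then $R=\tfrac23 c\,K$) does coincide with the paper's intended completion, which it explicitly declares ``identical with that of Lemma~\ref{lm:Legendre-R-first}.'' So the proposal is a correct strategy with the single decisive ingredient missing; supply the substitution $\bigl(1-\tfrac{x}{y}\bigr)^3=1-x^3$ (or an equivalent one, honestly derived), compute $b$, and verify the amplitude identity, and the argument closes as you describe.
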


\begin{proof}
We make the change of variable in the original
integral~\eqref{eq:R}:
$$
\biggl( 1 - \frac{x}{y} \biggr)^3 := 1 - x^3
$$
which gives
$$
R = \sqrt{3} \int_1^\infty \frac{dy}{\sqrt{4y^3 - 1}}\,.
$$

Then follows the change of variable
$$
4^{1/3} y =: z^2 + 1
$$
which gives
\begin{align*}
R &= \frac{2\sqrt{3}}{4^{1/3}} \int_{\sqrt{4^{1/3}-1}}^\infty
\frac{dz}{\sqrt{z^4 + 3z^2 + 3}}
\\
&= \frac{2\sqrt{3}}{4^{1/3}} \int_0^\infty 
\frac{dz}{\sqrt{z^4 + 3z^2 + 3}} 
- \frac{2\sqrt{3}}{4^{1/3}} \int_0^{\sqrt{4^{1/3}-1}}
\frac{dz}{\sqrt{z^4 + 3z^2 + 3}}
\\
&= \frac{2\sqrt{3}}{4^{1/3}} 
K\Biggl( \sqrt{\frac{2 - \sqrt{3}}{4}} \,\Biggr)
- \frac{2\sqrt{3}}{4^{1/3}} \int_0^{\sqrt{4^{1/3}-1}}
\frac{dz}{\sqrt{z^4 + 3z^2 + 3}}\,.
\end{align*}
From this point on the argument is identical with that of
Lemma~\ref{lm:Legendre-R-first}, only we use
Corollary~\ref{cr:Bowman-cases}(a) and
Corollary~\ref{cr:Legendre-eqn}(b). We leave the details to the
reader.
\end{proof}

\begin{proof}[Proof of Theorem~\ref{th:original-form}]
We now show Legendre's singular modulus relation~\eqref{eq:CM}.

We equate the two formulas for $R$ in Lemmas \ref{lm:Legendre-R-first}
and~\ref{lm:Legendre-R-second}, and observe the equality of the
complete elliptic integrals
$$
K\Biggl( \sqrt{\frac{2 + \sqrt{3}}{4}} \,\Biggr)
= K'\Biggl( \sqrt{\frac{2-\sqrt{3}}{4}} \,\Biggr),
$$
since
$$
\Biggl( \sqrt{\frac{2 - \sqrt{3}}{4}} \,\Biggr)^2
= 1 - \Biggl( \sqrt{\frac{2 + \sqrt{3}}{4}} \,\Biggr)^2.
$$
This completes Legendre's proof of Legendre's singular modular
relation~\eqref{eq:CM}.
\end{proof}

%%%%%%%%%%%%%%%%%%%%%%%%%%%%%%%%%%%%%%%%%%%%%%%%%

\section{Ramanujan's Ellipse} % 3
\label{sec:ellipse-perimeter}

At the end of one of his most famous papers \cite{Ramanujan},
Ramanujan states the next formula ---without proof~(!)

\begin{thm} % 3.1
\label{th:ellipse-perimeter}
If $a$ is the semimajor axis of an ellipse whose eccentricity is
$\sin \frac{\pi}{12} = \sin 15^\circ$, then the perimeter, $p$, of the
ellipse is
$$
p = a\,\sqrt{\frac{\pi}{\sqrt{3}}} \Biggl\{ 
\biggl( 1 + \frac{1}{\sqrt{3}} \biggr)
\frac{\Gamma(\frac{1}{3})}{\Gamma(\frac{5}{6})}
+ \frac{2\Gamma(\frac{5}{6})}{\Gamma(\frac{1}{3})} \Biggr\}.
$$
\end{thm}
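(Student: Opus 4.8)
The plan is to compute the perimeter $p = 4aE(e)$, where $E$ is the complete elliptic integral of the second kind and $e = \sin\frac{\pi}{12}$, by exploiting the singular value $k^2 = \sin^2\frac{\pi}{12}$ that Theorem~\ref{th:original-form} has just identified. At this modulus, $\frac{K'}{K} = \sqrt{3}$, so we are at the third singular value, and the strategy is to invoke Legendre's relation $E K' + E' K - K K' = \frac{\pi}{2}$ together with known closed forms for $K$, $E$, $K'$, $E'$ at this particular $k$. First I would recall (or derive from the arithmetic–geometric mean / Chowla–Selberg circle of ideas) that at the third singular value one has the classical evaluations
$$
K_3 = K\!\left(\sin\tfrac{\pi}{12}\right) = \frac{3^{1/4}\,\Gamma(\tfrac{1}{3})^3}{2^{7/3}\pi},
\qquad
E_3 = E\!\left(\sin\tfrac{\pi}{12}\right),
$$
the second of which is pinned down by a singular-value identity of the form $E_3 = \frac{K_3}{3} + \frac{\pi}{4\sqrt{3}\,K_3}$ (the analogue for $N=3$ of the well-known relation $E_1 = \frac{K_1}{2} + \frac{\pi}{4K_1}$ at the lemniscatic point). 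Combining these gives $E_3$ explicitly in terms of $\Gamma(\tfrac13)$ and $\pi$.

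The key steps, in order, are: (1) write $p = 4aE_3$ and reduce everything to $K_3$ and one auxiliary constant via Legendre's relation and the $N=3$ analogue $E_3 K_3' + E_3' K_3 - K_3 K_3' = \frac{\pi}{2}$ specialised using $K_3' = \sqrt{3}\,K_3$; (2) substitute the closed form for $K_3$ in terms of $\Gamma(\tfrac13)$; (3) convert $\Gamma(\tfrac13)$ to $\Gamma(\tfrac56)$ using the reflection and duplication formulas — specifically $\Gamma(\tfrac13)\Gamma(\tfrac23) = \frac{2\pi}{\sqrt3}$ and the Legendre duplication formula $\Gamma(\tfrac13)\Gamma(\tfrac56) = 2^{-1/3}\sqrt{\pi}\,\Gamma(\tfrac23)$, which together yield a clean relation between $\Gamma(\tfrac13)$ and $\Gamma(\tfrac56)$; (4) algebraically rearrange the resulting expression into the symmetric shape $\bigl(1+\tfrac{1}{\sqrt3}\bigr)\frac{\Gamma(1/3)}{\Gamma(5/6)} + \frac{2\Gamma(5/6)}{\Gamma(1/3)}$ and check the overall constant $\sqrt{\pi/\sqrt3}$.

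I expect the main obstacle to be step (1)–(2): establishing, rather than merely quoting, the exact value of $E_3$. The evaluation of $K_3$ via the AGM and the Gauss–Legendre relation is standard, but the companion formula for $E_3$ requires either differentiating the modular relation $\frac{K'}{K} = \sqrt{N}$ with respect to $k$ and using $\frac{dE}{dk}$, $\frac{dK}{dk}$, or invoking the Ramanujan–style formula for $E$ at singular values; pinning this down cleanly and legitimately is the delicate part. Once $E_3$ is in hand as $\alpha\,\Gamma(\tfrac13)^3/\pi + \beta\,\pi/\Gamma(\tfrac13)^3$ with explicit $\alpha,\beta$ involving only powers of $2$ and $3$, the remaining gamma-function bookkeeping in steps (3)–(4) is routine and the boxed formula falls out after simplification; I would present that bookkeeping compactly rather than in full detail.
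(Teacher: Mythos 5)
Your overall strategy --- write $p=4aE$, evaluate $K(\sin\frac{\pi}{12})$ in gamma functions, obtain $E$ from an $E$--$K$ relation special to the third singular value, then do the gamma bookkeeping --- is essentially the paper's route (the paper gets the $K$-evaluation \eqref{eq:K-gamma} self-containedly from Legendre's integral $R$ and the Beta function rather than quoting the AGM/Chowla--Selberg value, but that is a cosmetic difference). The genuine gap is in your step (1)--(2), exactly the step you flag as delicate: the singular-value identity you propose, $E_3=\frac{K_3}{3}+\frac{\pi}{4\sqrt{3}\,K_3}$, is false. The correct relation at $k=\sin\frac{\pi}{12}$ is Legendre's \eqref{eq:KE}, i.e.\ $E_3=\frac{\sqrt{3}+1}{2\sqrt{3}}K_3+\frac{\pi}{4\sqrt{3}\,K_3}$; the ``analogue of the lemniscatic case'' is not obtained by replacing $\tfrac12$ with $\tfrac13$, because Legendre's relation $EK'+E'K-KK'=\frac{\pi}{2}$ together with $K'=\sqrt{3}K$ still leaves two unknowns ($E$ and $E'$), and the extra input needed is the value $\alpha(3)=\frac{\sqrt{3}-1}{2}$ of the elliptic alpha function (equivalently Legendre's own computation), which yields the coefficient $\frac{\sqrt{3}+1}{2\sqrt{3}}$, not $\frac13$. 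Numerically, $E(\sin 15^\circ)\approx 1.544$ while your formula gives $\approx 0.816$, and carrying your coefficient through the bookkeeping would produce $\frac{2}{3}$ in place of Ramanujan's $1+\frac{1}{\sqrt{3}}$, so the theorem would not be recovered. Since you neither state this relation correctly nor indicate a proof of it (the paper itself only cites Legendre for \eqref{eq:KE}), this key ingredient is missing.

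Two smaller points: your duplication-formula instance should read $\Gamma(\tfrac13)\Gamma(\tfrac56)=2^{1/3}\sqrt{\pi}\,\Gamma(\tfrac23)$ (exponent $+\tfrac13$, not $-\tfrac13$); and once the correct \eqref{eq:KE} is used together with $K=\frac12\sqrt{\frac{\pi}{\sqrt{3}}}\,\frac{\Gamma(1/3)}{\Gamma(5/6)}$ (which follows from \eqref{eq:K-gamma} by reflection, as in the paper), the rest of your plan does go through as described.
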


We recall that the \emph{gamma function}, $\Gamma(x)$ is defined for all $x>0$ by the improper integral
$$\Gamma(x):=\int_{0}^{\infty}t^{x-1}e^{-t}~dt.$$

This beautiful formula of Ramanujan is remarkable for 
% various 
several
reasons.  

In the first place, in 1833 Liouville \cite{Liouville} had proved that
the formula for the arc length of an ellipse with eccentricity $k$,
namely,
$$
p = 4aE := 4a \int_0^{\pi/2} \sqrt{1 - k^2 \sin^2\theta} \,d\theta
$$
cannot be expressed as a finite combination of ``elementary''
functions and thus defines a new class of transcendentals. $E$~is
called a \textit{complete elliptic integral of the second kind}.
Yet, Ramanujan expresses it as a finite combination of gamma functions
of rational arguments.

Secondly, in 1949, Selberg and Chowla ``explained'' Ramanujan's result
when they proved \cite{ChowlaS} that $K$ could be expressed as a
\emph{finite product} of gamma functions of rational arguents if
$\frac{K'}{K} = \sqrt{N}$ for 
% any 
some 
integer~$N$. This had already been known since the time of Legendre
for $N = 1,2,3$. (The case $N = 3$ is the one discovered by Legendre
and is the ``third'' in order. So it is called the ``third'' singular
modulus.) On the face of it, this says nothing about a formula
for~$E$.

Finally, in 1811, 
%%% (which, by the way, comes before 1949 :-)
by another \emph{tour de force}, Legendre \cite[p.~60]{Legendre}
proved the relation that allowed the calculation of $E$ from that
of~$K$. Namely, if $k = \sin \frac{\pi}{12}$, then
\begin{equation}
\label{eq:KE} % (3.1)
\frac{\pi}{4\sqrt{3}}
= K \cdot \biggl\{ E - \frac{\sqrt{3} + 1}{2\sqrt{3}}\,K \biggr\}.
\end{equation}

In order to prove Ramanujan's formula, we first express 
$K(\sin 15^\circ)$ in terms of the Gamma function.

\begin{thm} % 3.2
\label{th:K-gamma}
\begin{equation}
\label{eq:K-gamma} % (3.2)
K(\sin 15^\circ) = \frac{1}{2\sqrt{3}} \sqrt{\frac{\pi}{\sqrt{3}}}
\cdot \frac{\Gamma(\frac{1}{6})}{\Gamma(\frac{2}{3})}\,.
\end{equation}
\end{thm}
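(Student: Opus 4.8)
The plan is to evaluate the complete elliptic integral $K(\sin 15^\circ)$ by connecting it to the integral $R = \int_0^1 (1-x^3)^{-2/3}\,dx$ of equation~\eqref{eq:R}, which we have already expressed in Lemma~\ref{lm:Legendre-R-second} in terms of $K\bigl(\sqrt{(2-\sqrt3)/4}\bigr) = K(\sin 15^\circ)$. Thus it suffices to compute $R$ independently in closed form using the Beta function, and then solve for $K(\sin 15^\circ)$.

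First I would recognize $R$ as a Beta integral: substituting $u := x^3$ gives
$$
R = \int_0^1 (1-x^3)^{-2/3}\,dx = \frac{1}{3}\int_0^1 u^{-2/3}(1-u)^{-2/3}\,du = \frac{1}{3}B\!\left(\tfrac{1}{3},\tfrac{1}{3}\right) = \frac{1}{3}\,\frac{\Gamma(\frac13)^2}{\Gamma(\frac23)}.
$$
Next, from Lemma~\ref{lm:Legendre-R-second} we have $R = \dfrac{4\sqrt{3}}{3\cdot 4^{1/3}3^{1/4}}\,K(\sin 15^\circ)$, so equating the two expressions yields
$$
K(\sin 15^\circ) = \frac{4^{1/3}3^{1/4}}{4\sqrt{3}}\cdot\frac{\Gamma(\frac13)^2}{\Gamma(\frac23)}.
$$
The remaining task is purely a gamma-function manipulation: I must show this equals $\dfrac{1}{2\sqrt3}\sqrt{\dfrac{\pi}{\sqrt3}}\cdot\dfrac{\Gamma(\frac16)}{\Gamma(\frac23)}$, i.e. that $\Gamma(\frac13)^2 = \dfrac{4^{1/3}\pi^{1/2}}{2\cdot 3^{1/4}\cdot 3^{1/4}}\cdot\dfrac{3^{1/2}}{4^{1/3}}\cdot\dfrac{\Gamma(\frac16)}{\text{(const)}}$ after collecting powers. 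To do this I would invoke the Legendre duplication formula $\Gamma(2z) = \dfrac{2^{2z-1}}{\sqrt\pi}\,\Gamma(z)\Gamma(z+\tfrac12)$ with $z = \tfrac16$, which relates $\Gamma(\frac13)$ to $\Gamma(\frac16)\Gamma(\frac23)$, together with the reflection formula $\Gamma(\frac16)\Gamma(\frac56) = \pi/\sin(\pi/6) = 2\pi$ if needed to clean up residual factors.

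The main obstacle, such as it is, will be bookkeeping: tracking the fractional powers of $2$, $3$, and $\pi$ through the duplication formula and verifying that everything collapses to exactly the constant $\frac{1}{2\sqrt3}\sqrt{\pi/\sqrt3}$. There is no conceptual difficulty — the only genuine input beyond the Beta-integral evaluation is the duplication formula — but it is easy to drop a factor of $2^{1/3}$ or misplace a fourth root, so I would carry the computation out carefully and double-check by numerically evaluating both sides (both are approximately $1.598$). An alternative route, avoiding Lemma~\ref{lm:Legendre-R-second}, would be to use Lemma~\ref{lm:Legendre-R-first}'s expression $R = \frac{4}{3\cdot4^{1/3}3^{1/4}}K(\sin 75^\circ)$ together with Legendre's relation, but the path through Lemma~\ref{lm:Legendre-R-second} is the most direct.
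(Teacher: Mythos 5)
Your proposal is correct and follows essentially the same route as the paper: evaluate $R=\int_0^1(1-x^3)^{-2/3}\,dx$ as $\tfrac13 B(\tfrac13,\tfrac13)$, equate with Lemma~\ref{lm:Legendre-R-second}, and reduce $\Gamma(\tfrac13)^2$ to $\Gamma(\tfrac16)$ via the duplication formula at $z=\tfrac16$ together with the reflection formula (the paper does the gamma reduction first and then solves for $K$, but the ingredients are identical). One small bookkeeping note: the reflection instance actually needed is $\Gamma(\tfrac13)\Gamma(\tfrac23)=2\pi/\sqrt{3}$, not $\Gamma(\tfrac16)\Gamma(\tfrac56)=2\pi$, after which the constants collapse exactly as you indicate.
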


\begin{proof}
Our proof freely uses the properties of the Gamma and Beta
functions~\cite{WhittakerW}:
\begin{equation}
\label{eq:beta} % (3.3)
B(x,y) := \frac{\Gamma(x)\Gamma(y)}{\Gamma(x + y)}
\end{equation}
where $x > 0$, $y > 0$; and expresses Legendre's original
integral~\eqref{eq:R} by~\eqref{eq:beta}. It is known that
\begin{equation}
\label{eq:beta1} % (3.4)
B(x,y) = \int_0^1 t^{x-1} (1 - t)^{y-1} \,dt
= n \int_0^1 s^{nx-1} (1 - s^n)^{y-1} \,ds,
\end{equation}
for $n > 0$, under the change of variable $t =: s^n$.
%%% That's not so well known, actually.
%%% I found nothing like it in Andrews, Askey and Roy. -- Joe

If we take $n = 3$, $x = y = \frac{1}{3}$ in \eqref{eq:beta1} and
apply \eqref{eq:beta}, we obtain the following formula for Legendre's
original integral in terms of gamma functions:
\begin{equation}
\label{eq:Rgamma} % (3.5)
R = \int_0^1 \frac{ds}{(1 - s^3)^{2/3}}
= \frac{1}{3} B\biggl( \frac{1}{3}, \frac{1}{3} \biggr)
= \frac{1}{3}\,
\frac{\Gamma(\frac{1}{3})\Gamma(\frac{1}{3})}{\Gamma(\frac{2}{3})}\,.
\end{equation}
Now we apply Legendre's \emph{duplication formula}~\cite{WhittakerW}, (p.240), with
$x = \frac{1}{6}$ to obtain
\begin{equation}
\label{eq:dup} % (3.6)
\Gamma\biggl( \frac{1}{6} \biggr) \Gamma\biggl( \frac{2}{3} \biggr)
= 2^{2/3} \sqrt{\pi} \cdot \Gamma\biggl( \frac{1}{3} \biggr),
\end{equation}
and then the \emph{reflection formula}~\cite{WhittakerW}, (p.239), to obtain
$$
\Gamma\biggl( \frac{2}{3} \biggr) 
= \frac{\pi}{\frac{\sqrt{3}}{2}\,\Gamma(\frac{1}{3})}\,.
$$

Substituting this into \eqref{eq:dup}, we obtain:
\begin{equation}
\label{eq:Rgamma1} % (3.7)
\Gamma^2\biggl( \frac{1}{3} \biggr)
= \frac{2^{1/3}}{\sqrt{3}} \cdot \sqrt{\pi} 
\cdot \Gamma\biggl( \frac{1}{6} \biggr),
\end{equation}
and substituting \eqref{eq:Rgamma1} into~\eqref{eq:Rgamma} gives us
the final form of Legendre's original integral in terms of gamma
functions:
\begin{equation}
\label{eq:Rgamma2} % (3.8)
R = \frac{1}{3} \sqrt{\frac{\pi}{3}} \cdot 2^{1/3}
\cdot \frac{\Gamma(\frac{1}{6})}{\Gamma(\frac{2}{3})}
\end{equation}
and finally, substituting \eqref{eq:Rgamma2} into
Lemma~\ref{lm:Legendre-R-second} and solving for
$K(15^\circ)$ gives us Legendre's formula~\eqref{eq:K-gamma}.
\end{proof}

We now give a simple proof of Ramanujan's formula for the perimeter of
an ellipse, since we have never seen one published.

\begin{proof}[Proof of Theorem~\ref{th:ellipse-perimeter}]
We use the reflection formula for the Gamma function,
$$
\Gamma(x) \Gamma(1 - x) = \frac{\pi}{\sin \pi x}
$$
with $x = \frac{1}{3}$ and again with $x = \frac{1}{6}$
in~\eqref{eq:K-gamma}; and we obtain
$$
K = \frac{1}{2} \sqrt{\frac{\pi}{\sqrt{3}}}\,
\frac{\Gamma(\frac{1}{3})}{\Gamma(\frac{5}{6})}\,.
$$
Then, substituting this into the relation~\eqref{eq:KE} of Legendre
and using $p = 4aE$, after some algebra we obtain the formula of
Ramanujan.
\end{proof}

%%%%%%%%%%%%%%%%%%%%%%%%%%%%%%%%%%%%%%%%%%%%%%%%

\section{Three-body choreography on a lemniscate} % 4
\label{sec:choreography}

The celebrated (unsolvable) \emph{three-body problem} (see \cite{wiki}
and the references listed therein) has challenged mathematicians for
over 350 years. We formulate it as follows.

\begin{quote}
\normalsize\itshape
Three point masses under the action of Newtonian gravity have
prescribed initial positions and velocities. It is required to
determine their positions and velocities at all later times.
\end{quote}

Mathematically, the problem reduces to the solution of nine nonlinear
coupled second-order ordinary differential equations. In general, no
closed-form solution exists since the resulting dynamical system is
\emph{chaotic} for most initial conditions.

However there are families of \emph{periodic} solutions for certain
special cases.

\begin{enumerate}
\item % (a)
In 1767, Leonhard Euler found three families of periodic solutions in
which the three masses are \emph{collinear}.

\item % (b)
In 1772, Lagrange found a family of solutions in which the three
masses form an \emph{equilateral triangle} at each instant.

\item % (c)
Over two hundred years passed (!) until Cris Moore~\cite{Moore},
in~1993, caused something of a sensation when he numerically
discovered a zero-momentum solution with three equal masses moving
around a \emph{figure-eight} shape orbit. In 2000 Alain Chenciner and
Richard Montgomery \emph{mathematically proved its formal existence}
\cite{ChencinerM}. Again, we emphasize that it was the first real
concrete example of a three-body orbital periodical solution in two
centuries!
\end{enumerate}

Since 2000, many researchers have numerically found other $n$-body
periodic solutions and today they number in the thousands! But they
all share the unhappy property of being \emph{unstable}, whereas the
figure-eight orbit has been numerically shown to \emph{be} stable (at
least for small perturbations).   Fujiwara, Fukuda, and Ozaki  ~\cite{FujiwaraFO1} beautifully related the figure-eight solution to the dihedral group, $D_6$, of regular hexagons and showed that the bifurcations of every figure-eight solutions are determined by the irreducible representations of $D_6$. The Catalan scientist Carles Sim\'o
\cite{Simo} coined the term \emph{choreography} to mean a periodic
motion on a closed orbit, where $N$ bodies chase each other in this
orbit with \emph{equal time-spacing}.

Every student of mathematics encounters a famous curve that looks like
a figure-eight, namely Bernoulli's \emph{lemniscate}.

Although the figure-eight orbit of Moore, Chenciner and Montgomery is
\emph{not} a lemniscate, a lemniscate actually approximates the
figure-eight orbit with an error of about one part in one thousand,
and when the two are placed on the same computer screen it is
difficult to distinguish them. Thus it was quite natural to
investigate three-body choreography on a lemniscate, itself, and
Toshiaki Fujiwara, Hiroshi Fukuda and Hiroshi Ozaki published a
detailed report in their beautiful paper~\cite{FujiwaraFO}.

In order to describe their results and to make our paper as
self-contained as possible, we must review some simple elementary
properties of the real-valued Jacobian elliptic functions
\cite{Bowman}. Then we will show how Legendre's singular modulus
amazingly arises from the condition that \emph{the total momentum is
zero}.

%%%%%%%%%%%%%%%%%%%%%%%%%%%%%%%%%%%%%%%%%%%%%

\subsection{Elliptic Functions} 

Let $k$ be a fixed real number such that $0 \leq k \leq 1$ and let
\begin{align*}
u &:= \int_0^x \frac{dt}{\sqrt{(1 - t^2)(1 - k^2 t^2)}}
\\
K &:= \int_0^1 \frac{dt}{\sqrt{(1 - t^2)(1 - k^2 t^2)}} 
\end{align*}
where $-1 \leq x \leq 1$ and the square roots are positive.  Please note that this definition of $K$ is equivalent to that given in Legendre's original statement of his singular modulus.

Then $u$ is an odd function of $x$ which increases monotonically from
$-K$ to $K$ as $x$ increases from $-1$ to $1$. This implies that $x$
\emph{is an odd function of $u$ which increases momotonically from
$-1$ to $1$ as $u$ increases from $-K$ to $K$.} For historical reasons
$x$ is denoted $\sn u$ which means:
\begin{equation}
\label{eq:sn} % (4.1)
u =  \int_0^{\sn u}\frac{dt}{\sqrt{(1-t^2)(1-k^2t^2)}}
\end{equation}

Taking the derivative with respect to $x$ we obtain
\begin{align*}
\frac{du}{dx} &= \frac{1}{\sqrt{(1 - x^2)(1 - k^2 x^2)}}\,,
\\
\frac{dx}{du} &= \sqrt{(1 - \sn^2 u)(1 - k^2 \sn^2 u)}\,.
\end{align*}

Cayley \cite[p.~8]{Cayley} calls $\sn u$ ``a sort of sine function''
and by analogy we define:
\begin{align*}
\cn u &:= \sqrt{1 - \sn^2 u},
\\
\dn u &:= \sqrt{1 - k^2 \sn^2 u},
\end{align*}
(which Cayley \cite[p.~8]{Cayley} calls ``sorts of cosine functions'')
where we take the positive square root so long as $u$ is confined to the interval $-K \leq u \leq K$ so
that $\cn$ and $\dn$ are \emph{even} functions of $u$. 
%%% Need to add the info that $\cn u$ can take negative values
We also note the following identity:
\begin{equation}
\label{eq:sc} % (4.2)
\sn^2 u + \cn^2 u = 1,
\end{equation}
and the following special values:
\begin{alignat*}{2}
\sn 0 &= 0, \quad  & \sn K &= 1,
\\
\cn 0 &= 1, \quad  & \cn K &= 0,
\\
\dn 0 &= 1, \quad  & \dn K &= \sqrt{1 - k^2} =: k'.
\end{alignat*}

Now we prove the fundamental identity.

\begin{thm}[Addition Theorem] % 4.1
\label{th:aat}
The following equation holds identically:
\begin{equation}
\label{eq:aat} % (4.3)
\sn(u + v) = \frac{\sn v \cn u \dn u + \sn u \cn v \dn v}
{1 - k^2 \sn^2u \sn^2v}\,.
\end{equation}
\end{thm}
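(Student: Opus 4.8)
The plan is to prove the addition theorem by the classical method of Euler: fix $u+v=c$ constant, differentiate, and reduce the problem to an ordinary differential equation whose first integral is the right-hand side of \eqref{eq:aat}. First I would introduce the abbreviations $s_1:=\sn u$, $s_2:=\sn v$, $c_1:=\cn u$, $c_2:=\cn v$, $d_1:=\dn u$, $d_2:=\dn v$, and recall from the differentiation formulas established above that $s_1'=c_1d_1$ and, via \eqref{eq:sc} together with the defining relations $c_1^2=1-s_1^2$ and $d_1^2=1-k^2s_1^2$, that $c_1'=-s_1d_1$ and $d_1'=-k^2s_1c_1$ (these last two follow by differentiating the square-root definitions of $\cn$ and $\dn$). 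The analogous formulas hold for $s_2,c_2,d_2$ with derivatives taken with respect to $v$.

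The key device is to regard $v$ as a function of $u$ through the constraint $u+v=\text{const}$, so that $dv/du=-1$ and hence $s_2'=-c_2d_2$, $c_2'=s_2d_2$, $d_2'=k^2s_2c_2$ as functions of $u$. I would then let
\[
N:=s_1c_2d_2+s_2c_1d_1,\qquad D:=1-k^2s_1^2s_2^2,
\]
and show by direct differentiation (using the derivative formulas above and simplifying with $c_i^2=1-s_i^2$, $d_i^2=1-k^2s_i^2$) that $N'D-ND'=0$, i.e.\ that the quotient $N/D$ is constant along the level set $u+v=\text{const}$. The computation of $N'$ and $D'$ is the routine but slightly lengthy heart of the argument: one expands, collects terms, and repeatedly eliminates $c_i^2,d_i^2$ in favor of $s_i^2$ until the combination collapses. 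Once $N/D$ is known to be constant on each such level set, I would evaluate it at the convenient point $u=0$ (so $v=c=u+v$), where $s_1=0$, $c_1=d_1=1$, giving $N/D=s_2/1=\sn(u+v)$, which is exactly formula \eqref{eq:aat}.

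The main obstacle I anticipate is purely computational: verifying $N'D=ND'$ requires careful bookkeeping, since $N'$ produces six terms and $D'$ two, and the cancellations only become visible after systematically using the Pythagorean-type identities to reduce everything to polynomials in $s_1,s_2$ (and $k^2$). A cleaner but essentially equivalent route, which I would mention as an alternative, is to verify that both sides of \eqref{eq:aat}, viewed as functions of $u$ with $v$ fixed, satisfy the same first-order differential equation $\bigl(\tfrac{dx}{du}\bigr)^2=(1-x^2)(1-k^2x^2)$ together with the same initial value at $u=0$, and then invoke uniqueness; but this still rests on the same algebraic reduction. I would also remark that, since all the functions involved are real-analytic on $-K\le u,v\le K$ and the denominator $D=1-k^2s_1^2s_2^2$ never vanishes there (as $0\le k<1$ forces $k^2s_1^2s_2^2<1$, with the boundary case handled by continuity), the identity, once established on an interval, holds throughout the stated range.
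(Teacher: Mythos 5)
Your proposal is correct and is essentially the paper's own argument: the paper also proves the theorem by the Euler-style differentiation trick, showing by brute-force computation that the right-hand side $z(u,v)$ satisfies $\partial z/\partial u=\partial z/\partial v$ (equivalently, is constant on the lines $u+v=\mathrm{const}$, which is how you phrase it) and then identifying the resulting function of $u+v$ by setting one variable to zero. The only difference is cosmetic — constancy along level sets versus equality of partial derivatives — and both treatments leave the same routine algebraic verification to the reader.
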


\begin{proof}
Let $z(u,v)$ be the right hand side of~\eqref{eq:aat}. Then, a brute
force computation shows that
$$
\frac{\partial z}{\partial u} = \frac{\partial z}{\partial v}\,.
$$

This means that $z(u,v) = f(u + v)$ for some function $f$. Take
$v = 0$ in $z(u,v)$. We obtain
$$
f(u) \equiv \sn u.
\eqno \qed
$$
\hideqed
\end{proof}

The addition theorem allows us to prove the fundamental period
properties.

\begin{thm} % 4.2
\label{th:period-properties}
The following period equations hold:
\begin{align*}
\sn(u + 4K) &= \sn u 
\\
\cn(u + 4K) &= \cn u 
\\ 
\dn(u + 2K) &= \dn u 
\end{align*}
\end{thm}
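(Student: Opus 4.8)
The plan is to derive all three period relations from the Addition Theorem~\eqref{eq:aat} together with the special values of $\sn$, $\cn$, $\dn$ at $0$ and $K$ already recorded above. The key observation is that $K$ plays the role of a quarter-period, so I will first compute the shift by $K$, then iterate to get the shift by $2K$, $3K$, and finally $4K$.

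First I would set $v = K$ in~\eqref{eq:aat}. Using $\sn K = 1$, $\cn K = 0$, $\dn K = k'$, the numerator collapses to $\cn u\,\dn u$ and the denominator becomes $1 - k^2\sn^2 u = \dn^2 u$, giving
\begin{equation}
\label{eq:snK}
\sn(u + K) = \frac{\cn u}{\dn u}\,.
\end{equation}
The companion formulas for $\cn(u+K)$ and $\dn(u+K)$ I would obtain either by establishing the analogous addition formulas for $\cn$ and $\dn$ (each proved by the same ``brute force derivative'' trick used for Theorem~\ref{th:aat}), or, more economically, by using $\cn(u+K) = \sqrt{1-\sn^2(u+K)}$ and $\dn(u+K) = \sqrt{1-k^2\sn^2(u+K)}$ together with~\eqref{eq:snK} and the identities $\sn^2 + \cn^2 = 1$, $k^2\sn^2 + \dn^2 = 1$; a short simplification yields
\begin{align}
\cn(u + K) &= -k'\,\frac{\sn u}{\dn u}\,, \label{eq:cnK}\\
\dn(u + K) &= \frac{k'}{\dn u}\,. \label{eq:dnK}
\end{align}

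Next I would iterate. Replacing $u$ by $u+K$ in~\eqref{eq:snK}--\eqref{eq:dnK} and simplifying gives the half-period shifts:
\begin{align*}
\sn(u + 2K) &= -\sn u, \\
\cn(u + 2K) &= -\cn u, \\
\dn(u + 2K) &= \dn u,
\end{align*}
which is already the third claimed identity. Applying the shift by $2K$ twice then gives $\sn(u+4K) = -\sn(u+2K) = \sn u$ and $\cn(u+4K) = -\cn(u+2K) = \cn u$, which are the first two claimed identities.

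The one point demanding genuine care is the determination of the \emph{signs} in~\eqref{eq:cnK} and~\eqref{eq:dnK} (and hence in the half-period formulas): passing through a square root loses sign information, so I must check the branch at a convenient point — for instance at $u = 0$, where $\sn(0+K) = \sn K = 1 = \cn 0/\dn 0$ fixes~\eqref{eq:snK}, while $\cn K = 0$ and $\dn K = k'$ pin down~\eqref{eq:cnK}--\eqref{eq:dnK} once continuity in $u$ on a suitable interval is invoked. Alternatively, deriving genuine addition theorems for $\cn$ and $\dn$ sidesteps the branch issue entirely, at the cost of two more ``brute force'' verifications analogous to the proof of Theorem~\ref{th:aat}; I would likely take that route for cleanliness. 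Everything else is routine algebra with the Pythagorean-type identities $\sn^2 u + \cn^2 u = 1$ and $k^2\sn^2 u + \dn^2 u = 1$.
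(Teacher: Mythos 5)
Your proposal is correct and follows essentially the same route as the paper: set $v=K$ in the addition theorem to get $\sn(u+K)=\cn u/\dn u$, then iterate to obtain the $2K$ and $4K$ shifts, and you in fact supply the $\cn$, $\dn$ shift formulas that the paper dismisses as ``similar.'' One small caveat: checking the branch at $u=0$ does not fix the sign in $\cn(u+K)=-k'\,\sn u/\dn u$, since both sides vanish there, so you would need a nonzero test point (or, as you suggest, the brute-force addition theorems for $\cn$ and $\dn$, which settles the matter cleanly).
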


\begin{proof}
$$
\sn(u + K) 
= \frac{\sn K \cn u \dn u + \sn u \cn K \dn K}{1 - k^2 \sn^2u \sn^2K}
= \frac{\cn u}{\dn u}
$$
where we used the definitions of $\cn$ and $\dn$, the identity
\eqref{eq:sc}, and the special values. Again we conclude that
%%% we need $\cn 2K = -1$, $\dn 2K = +1$ here
$$
\sn(u + 2K) = -\sn u
$$
and finally
$$
\sn(u + 4K) = \sn u.
$$
The other period equations are similar.
\end{proof}

This completes our introduction to the Jacobian elliptic functions.

%%%%%%%%%%%%%%%%%%%%%%%%%%%%%%%%%%%%%%%%%%%%%%%%%%

\subsection{Choreography on the Lemniscate}

Fujiwara, Fukuda and Ozaki \cite{FujiwaraFO} use the Jacobian elliptic
functions $\sn$ and $\cn$ to parametrize the lemniscate:
$$
(x^2 + y^2)^2 = x^2 - y^2
$$
as follows. The curve is given by the vector equation
\begin{equation}
\label{eq:x} % (4.4)
\xx(t) := x(t)\,\hat x + y(t)\,\hat y
\end{equation}
where $\hat x := (1,0)$ and $\hat y := (0,1)$ are the two orthogonal
base unit vectors defining the plane of the motion, and the coordinate
functions are explicitly given by:
$$
x(t) := \frac{\sn t}{1 + \cn^2 t}\,, \quad 
y(t) := \frac{\sn t \cn t}{1 + \cn^2 t}\,.
$$

This is a smooth periodic motion on the lemniscate with period
$$
T := 4K
$$
where $K$ is the complete elliptic integral of the first kind. The
positions of the choreographic three bodies are given by the vector:
$$
\{\xx_1(t),\xx_2(t),\xx_3(t)\}
= \biggl\{ \xx(t), \xx\biggl( t + \frac{4K}{3} \biggr),
\xx\biggl( t - \frac{4K}{3} \biggr) \biggr\}.
$$

What does all of this have to do with Legendre's singular modulus? Up
to this point everything we have said about the lemniscate works for
\emph{any} modulus~$k$. But \emph{now} Legendre enters. We posit that
\emph{the motion} \emph{must conserve the center of mass}.
Analytically this means that the following vector equation must hold
for all time~$t$:
\begin{equation}
\label{eq:ccm} % (4.5)
\xx(t) + \xx\biggl( t + \frac{4K}{3} \biggr) 
+ \xx\biggl( t - \frac{4K}{3} \biggr) = \mathbf{0}.
\end{equation}

The authors prove the following remarkable theorem.

\begin{thm} % 4.3
\label{th:choreography}
The equation \eqref{eq:ccm} holds if and only if the modulus is given
by
$$
\boxed{k^2 = \frac{2 + \sqrt{3}}{4} = \cos^2\frac{\pi}{12}.}
$$
\end{thm}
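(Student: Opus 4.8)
The plan is to pass to the complex coordinate $z(t):=x(t)+i\,y(t)$ on the lemniscate and to exploit its very compact closed form. A direct computation gives
$$
z(t)=\frac{\sn t\,(1+i\,\cn t)}{1+\cn^2 t}=\frac{\sn t}{1-i\,\cn t},
$$
so that the vector equation \eqref{eq:ccm} is equivalent to the single scalar identity $z(t)+z(t+\tfrac{4K}{3})+z(t-\tfrac{4K}{3})=0$ for all $t$. (Heuristically, $\tfrac{4K}{3}$ is exactly the shift that annihilates every Fourier mode of the $4K$-periodic function $z$ whose index is a multiple of $3$, which is why this particular third-period point is the one that can possibly work.) Write $a:=\tfrac{4K}{3}$ and abbreviate $s:=\sn t,\ c:=\cn t,\ d:=\dn t$ and $S:=\sn a,\ C:=\cn a,\ D:=\dn a$.

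Next I would expand $z(t\pm a)$ using the addition formula \eqref{eq:aat} for $\sn$ together with its companion $\cn(u+v)=\dfrac{\cn u\cn v-\sn u\sn v\dn u\dn v}{1-k^2\sn^2u\sn^2v}$ (classical, and provable exactly as Theorem~\ref{th:aat}), keeping in mind that $\sn(-a)=-S$, $\cn(-a)=C$, $\dn(-a)=D$. Writing $\Delta:=1-k^2s^2S^2$ for the common denominator of those formulas, the computation collapses thanks to the two simplifications
$$
C^2+S^2d^2=\Delta,\qquad \Delta^2-c^2C^2+s^2S^2d^2D^2=\Delta\,(s^2+S^2-2k^2s^2S^2),
$$
which follow from $d^2=1-k^2s^2$, $c^2=1-s^2$, $C^2=1-S^2$. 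Setting $E:=s^2+S^2-2k^2s^2S^2$ one obtains $z(t+a)+z(t-a)=\dfrac{2sD\,(C-ic)}{E-2icC}$; adding $z(t)=\dfrac{s}{1-ic}$, dividing by the factor $s$ (which does not vanish identically), and clearing denominators reduces the conservation law to
$$
E-2icC+2D\,(C-ic)(1-ic)=0 .
$$
Separating real and imaginary parts and using $c^2=1-s^2$ turns the real part into a polynomial $As^2+B$ in $s^2$ and the imaginary part into $-2c\,(C+CD+D)$; since $\sn^2 t$ and $\cn t$ each run over a continuum, the identity forces $A=B=0$ and $C+CD+D=0$, i.e.
$$
1+2D=2k^2S^2,\qquad S^2+2D(C-1)=0,\qquad C(1+D)+D=0 .
$$

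Now adjoin the Pythagorean relations $C^2=1-S^2$ and $D^2=1-k^2S^2$. This system is overdetermined, but solving it is immediate: the first equation gives $k^2S^2=\tfrac12(1+2D)$, which substituted into $D^2=1-k^2S^2$ yields $2D^2+2D-1=0$, hence $D=\tfrac{\sqrt3-1}{2}$ (the root in $(0,1)$); then $C=-D/(1+D)=\sqrt3-2$ and $S^2=1-C^2=4\sqrt3-6$, the second equation is then satisfied automatically, and
$$
k^2=\frac{1+2D}{2S^2}=\frac{\sqrt3}{8\sqrt3-12}=\frac{2+\sqrt3}{4}=\cos^2\tfrac{\pi}{12},
$$
which proves the ``only if''. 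For the ``if'' one runs the equivalence in reverse: when $k^2=\tfrac{2+\sqrt3}{4}$, one checks from the period properties (Theorem~\ref{th:period-properties}) --- e.g.\ $\sn(2a)=\sn(4K-a)=-\sn a$, which via the duplication case of \eqref{eq:aat} and the Pythagorean relations gives the polynomial satisfied by $\sn\tfrac{4K}{3}$, together with the analogous relations for $\cn$ and $\dn$ --- that $\sn\tfrac{4K}{3}$, $\cn\tfrac{4K}{3}$, $\dn\tfrac{4K}{3}$ are exactly the numbers $S,C,D$ found above, so the three displayed conditions hold and hence so does \eqref{eq:ccm}.

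I expect the main obstacle to be the second step: before the two algebraic simplifications $C^2+S^2d^2=\Delta$ and $\Delta^2-c^2C^2+s^2S^2d^2D^2=\Delta E$ are spotted, the brute-force superposition of the three addition-theorem expressions is a forbidding rational-function identity in $\sn t,\cn t,\dn t$, and it is only after these cancellations that the problem becomes essentially linear in $S,C,D$. A lesser difficulty is the ``if'' direction, where one must pin down independently the exact values of $\sn,\cn,\dn$ at $\tfrac{4K}{3}$ for the distinguished modulus; this needs the triplication (equivalently, period-reflection) identities rather than the bare addition theorem, but is otherwise routine.
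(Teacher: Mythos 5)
Your argument is correct---I verified the two collapsing identities $C^2+S^2d^2=\Delta$ and $\Delta^2-c^2C^2+s^2S^2d^2D^2=\Delta E$, the reduction of \eqref{eq:ccm} to the three conditions $1+2D=2k^2S^2$, $S^2+2D(C-1)=0$, $C(1+D)+D=0$, and the resulting values $D=\tfrac{\sqrt{3}-1}{2}$, $C=\sqrt{3}-2$, $S^2=4\sqrt{3}-6$, $k^2=\tfrac{2+\sqrt{3}}{4}$---but it takes a genuinely different route from the paper's. The paper never treats \eqref{eq:ccm} as an identity in $t$: it evaluates it at the single instant $t=K$ to get $\sn\tfrac{K}{3}=\sqrt{3}-1$ (Lemma~\ref{lm:sn-value}), then computes $\sn\tfrac{10K}{3}$ in two ways, as $\sn(3K+\tfrac{K}{3})$ and as $\sn(4K-\tfrac{2K}{3})$ via duplication (Lemmas~\ref{lm:sn-relation-one} and~\ref{lm:sn-relation-two}), and solves for $k^2$; its converse is a one-line claim. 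Your complexification $z=\sn t/(1-i\cn t)$ together with the $\sn$ and $\cn$ addition theorems costs more algebra (and needs the $\cn$ addition formula, which is not in the paper but is provable exactly as Theorem~\ref{th:aat}), yet it buys a clean structural equivalence: \eqref{eq:ccm} holds if and only if your three algebraic conditions on $S,C,D$ hold, so both implications become transparent and one sees precisely how overdetermined the constraint is; the paper's route buys economy, at the price of arguing only necessity in detail. Two small caveats. First, your parenthetical Fourier heuristic is stated backwards: summing over the shifts $0,\pm\tfrac{4K}{3}$ annihilates exactly the modes whose index is \emph{not} a multiple of $3$ and retains the multiples of $3$, so \eqref{eq:ccm} says that $z$ has no Fourier mode with index divisible by $3$ (this does not affect the proof). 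Second, in the ``if'' direction you still owe the explicit evaluation of $\sn\tfrac{4K}{3}$, $\cn\tfrac{4K}{3}$, $\dn\tfrac{4K}{3}$ at $k^2=\tfrac{2+\sqrt{3}}{4}$; your sketch via $\sn(2a)=-\sn a$, the signs $\cn\tfrac{4K}{3}<0<\dn\tfrac{4K}{3}$, and the Pythagorean relations does work (for your values one indeed has $2CD=-(1-k^2S^4)$), but it should be carried out and the correct root selected---though as it stands it is no less complete than the paper's own one-sentence converse.
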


This means that the quarter-period, $K$, in the period of motion is
the $K'$ in Legendre's relation! So, some 209 years after Legendre
published his singular modulus, it reappears \emph{as a fundamental
property in a newly discovered choreography for the famous three-body
problem!}

We will give their proof (filling in many details) which uses the
properties of the Jacobian elliptic functions reviewed in the previous
section.

\begin{proof}[Proof of Theorem~\ref{th:choreography}]
We assume that \eqref{eq:ccm} holds.

First we establish a lemma which will be used in the final step in the
proof.

\begin{lemma} % 4.4
\label{lm:sn-value}
$\sn \frac{K}{3} = \sqrt{3} - 1$.
\end{lemma}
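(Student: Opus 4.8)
The plan is to exploit the triplication formula for $\sn$ at the special value $u=K/3$, where by definition $3u=K$, so that $\sn(3u)=\sn K=1$. The starting point is the addition theorem (Theorem~\ref{th:aat}): first compute $\sn(2u)$ from $\sn(u+u)$, then compute $\sn(3u)=\sn(2u+u)$ by a second application, expressing everything in terms of $s:=\sn u$ alone using $\cn^2 u=1-s^2$ and $\dn^2 u=1-k^2 s^2$. Setting the resulting rational function of $s$ equal to $1$ clears to a polynomial equation in $s$ with coefficients in $k^2$; since the problem already fixes $k^2=\frac{2+\sqrt3}{4}$ by Theorem~\ref{th:choreography}, I would substitute that value and show that $s=\sqrt3-1$ is the relevant root.

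The key steps, in order: (1) derive $\sn(2u)=\dfrac{2sc d}{1-k^2 s^4}$ where $c=\cn u$, $d=\dn u$; (2) apply the addition theorem again to get $\sn(3u)$ as a rational expression in $s,c,d$, then square-clear the $c,d$ terms (they enter only through $c^2,d^2$ and the product $cd$ appears to even total degree after combining, or else rationalize by the standard trick of writing $\sn(3u)$ as an odd rational function of $s$ via $c^2=1-s^2$, $d^2=1-k^2s^2$); (3) impose $\sn(3u)=1$ to obtain a polynomial $P(s)=0$; (4) plug in $k^2=\frac{2+\sqrt3}{4}$ and verify, by direct substitution, that $P(\sqrt3-1)=0$, using $(\sqrt3-1)^2=4-2\sqrt3$ and $k^2(\sqrt3-1)^2=\frac{(2+\sqrt3)(4-2\sqrt3)}{4}=\frac{2-\sqrt3}{2}$; (5) argue uniqueness: since $u\mapsto\sn u$ is strictly increasing on $[0,K]$ and $K/3\in(0,K)$, the value $\sn\frac{K}{3}$ is the unique root of $P$ lying in $(0,1)$, and $\sqrt3-1\approx0.732$ lies there, so it is that root. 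Alternatively, and perhaps more cleanly, one can reuse Corollary~\ref{cr:Legendre-eqn}(b): with $k=\sin15^\circ$ the trisection amplitude satisfies $\cos\Phi=(2^{2/3}-1)\sqrt{\frac{2+\sqrt3}{\sqrt3}}$, but here the modulus is $\cos15^\circ$, so the cleanest route is the direct triplication computation above rather than appealing to that corollary.

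The main obstacle I expect is the algebraic bookkeeping in step (2)–(3): the second application of the addition theorem produces a denominator $1-k^2\sn^2(2u)\sn^2 u$ and a numerator mixing $\sn(2u)\cn u\dn u$ with $\sn u\cn(2u)\dn(2u)$, and the half-angle-type expressions $\cn(2u)=1-2s^2+k^2s^4$ over $1-k^2s^4$ and $\dn(2u)$ must be substituted and the whole thing cleared to a single polynomial identity. This is routine but lengthy; I would organize it by first getting a clean closed form for $\sn(3u)$ as $\dfrac{s\,(3-\dots)}{1-\dots}$ (a known ninth/higher-degree rational function), and only then specialize $k^2$. Once $k^2=\frac{2+\sqrt3}{4}$ is inserted the numbers collapse nicely because $2-\sqrt3$ and $2+\sqrt3$ are reciprocal up to the factor $4$, so the verification that $\sqrt3-1$ is a root should be short. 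A sanity check on the degree and on the numerical value $\sn\frac{K}{3}\approx0.732$ will confirm no root was lost.
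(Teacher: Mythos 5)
There is a genuine gap here, and it is one of logical order rather than of computation. In the paper this lemma lives inside the proof of Theorem~\ref{th:choreography}, under the standing hypothesis that the center-of-mass condition \eqref{eq:ccm} holds and with the modulus $k$ \emph{unknown}; the lemma is precisely the input that, combined with the two evaluations of $\sn\frac{10K}{3}$, lets one \emph{solve for} $k^2=\frac{2+\sqrt3}{4}$. Your proposal instead assumes $k^2=\frac{2+\sqrt3}{4}$ at the outset, citing Theorem~\ref{th:choreography} — but that theorem's forward implication is exactly what the lemma is needed to prove, so the argument is circular in the paper's logical structure. (Note also that the bare statement $\sn\frac K3=\sqrt3-1$ is false for generic $k$, e.g.\ $k=0$ gives $\sin\frac\pi6=\frac12$, so some hypothesis is indispensable, and the intended one is \eqref{eq:ccm}, not the value of $k$.)

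The paper's proof avoids the addition/triplication machinery entirely: it evaluates \eqref{eq:ccm} at $t=K$, where $\xx(K)=(1,0)$, uses the periodicity and oddness of the parametrization to conclude $x\bigl(\frac K3\bigr)=\frac12$, i.e.
\[
\frac{\sn \frac K3}{1+\cn^2\frac K3}=\frac12
\quad\Longrightarrow\quad
\sn\frac K3 = 1-\tfrac12\sn^2\frac K3,
\]
a quadratic whose positive root is $\sqrt3-1$, valid for whatever modulus satisfies \eqref{eq:ccm}. Your triplication computation (impose $\sn(3u)=1$ with $u=K/3$ and the known $k^2$) is a correct and reasonable way to justify the paper's unproved closing sentence ``Conversely, taking $k^2$ given in the theorem gives us $\sn\frac K3=\sqrt3-1$,'' so it is not wasted work — but it cannot serve as the proof of the lemma in its role here. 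To repair your argument you would need to derive $\sn\frac K3=\sqrt3-1$ from \eqref{eq:ccm} and the lemniscate parametrization, without presupposing the value of $k$.
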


\begin{proof}
Since $\xx(K) = \hat x = (1,0)$, the equation \eqref{eq:ccm} shows us
that the $\hat x$-components of the other two masses must be 
$x\bigl( K \pm \frac{4K}{3} \bigr) = -\half$. Now
\begin{align*}
\xx(K) &= \hat x = (1,0)
\\
\implies x(K) &= 1
\\
\implies x\biggl( K - \frac{4K}{3} \biggr) 
&=  x\biggl( -\frac{K}{3} \biggr) = - \frac{1}{2}
\\
\implies x\biggl( \frac{K}{3} \biggr) &= \frac{1}{2}
\\
\implies \frac{\sn \frac{K}{3}}{1 + \cn^2 \frac{K}{3}} &= \frac{1}{2}
\\
\implies \sn \frac{K}{3}
&= \frac{1}{2} \biggl( 1 + \cn^2 \frac{K}{3} \biggr)
= 1 - \frac{1}{2} \sn^2 \frac{K}{3}
\\
\implies \sn \frac{K}{3} &= \sqrt{3} - 1
\end{align*}
where the last equation follows since $\sn \frac{K}{3} > 0$.
\end{proof}

Perhaps the reader would like to explore the relation between
Eq.~\eqref{eq:sinphi} and Lemma~\ref{lm:sn-value}.

We now evaluate $\sn \frac{10K}{3}$ in two ways. This will give us an
algebraic equation for the modulus~$k$.

\begin{lemma} % 4.5
\label{lm:sn-relation-one}
On the one hand,
$$
\sn \frac{10K}{3} = -\frac{\cn \frac{K}{3}}{\dn \frac{K}{3}}\,.
$$
\end{lemma}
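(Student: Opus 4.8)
The plan is to compute $\sn\frac{10K}{3}$ by exploiting the periodicity relations of Theorem~\ref{th:period-properties} to reduce the large argument, and then to invoke the half-period shift formula $\sn(u+K)=\cn u/\dn u$ that was derived in the proof of that theorem. First I would write $\frac{10K}{3} = 4K - \frac{2K}{3}$, so that by the period relation $\sn(u+4K)=\sn u$ we have $\sn\frac{10K}{3} = \sn\bigl(-\frac{2K}{3}\bigr) = -\sn\frac{2K}{3}$, using that $\sn$ is odd. Alternatively one may write $\frac{10K}{3} = 2K + \frac{4K}{3}$ or $\frac{10K}{3} = 3K + \frac{K}{3}$; the choice $\frac{10K}{3}=3K+\frac{K}{3}$ looks most direct because it produces $\frac{K}{3}$ on the nose, which is the argument appearing in the claimed answer and in Lemma~\ref{lm:sn-value}.

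Carrying out that route, I would first note $\sn(u+3K) = \sn((u+2K)+K)$. From the proof of Theorem~\ref{th:period-properties} we have $\sn(u+2K) = -\sn u$, and applying the shift-by-$K$ formula $\sn(v+K) = \cn v/\dn v$ with $v = u+2K$ gives $\sn(u+3K) = \cn(u+2K)/\dn(u+2K)$. So I need the values of $\cn$ and $\dn$ at $u+2K$: the addition theorem (or the same bootstrap used for $\sn$) yields $\cn(u+2K) = -\cn u$ and $\dn(u+2K) = \dn u$. Hence $\sn(u+3K) = -\cn u/\dn u$. Setting $u = \frac{K}{3}$ gives exactly
$$
\sn\frac{10K}{3} = \sn\Bigl(\frac{K}{3}+3K\Bigr) = -\frac{\cn\frac{K}{3}}{\dn\frac{K}{3}},
$$
which is the assertion.

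The only genuine work is establishing the auxiliary shift identities $\cn(u+2K) = -\cn u$ and $\dn(u+2K) = \dn u$ (and, if one prefers, $\cn(u+K) = -k'\,\sn u/\dn u$, $\dn(u+K) = k'/\dn u$), since the excerpt only explicitly records the $\sn$-versions and states that "the other period equations are similar." I would derive these the same way the proof of Theorem~\ref{th:period-properties} handles $\sn$: apply the addition theorem (Theorem~\ref{th:aat}) — together with the analogous addition formulas for $\cn$ and $\dn$, or simply use $\cn = \sqrt{1-\sn^2}$ and $\dn = \sqrt{1-k^2\sn^2}$ with attention to signs once the argument leaves $[-K,K]$ — at $v = K$ using the special values $\sn K = 1$, $\cn K = 0$, $\dn K = k'$, then iterate. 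The sign bookkeeping when the argument exits the fundamental interval is the one spot that needs care, but it is routine; everything else is pure substitution.
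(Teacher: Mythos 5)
Your proposal is correct and takes essentially the same route as the paper: both use the decomposition $\frac{10K}{3}=3K+\frac{K}{3}$ and reduce everything to the addition theorem. The paper just applies Theorem~\ref{th:aat} once with $u=3K$, $v=\frac{K}{3}$, using $\sn 3K=-1$ and $\cn 3K=0$ (so the denominator becomes $1-k^2\sn^2\frac{K}{3}=\dn^2\frac{K}{3}$), while you obtain the same shift identity $\sn(u+3K)=-\cn u/\dn u$ by composing the $K$- and $2K$-shifts; the auxiliary sign facts you single out, $\cn(u+2K)=-\cn u$ and $\dn(u+2K)=\dn u$, are precisely what the paper sweeps into its ``as can easily be verified.''
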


\begin{proof}
\begin{align*}
\sn \frac{10K}{3} &= \sn\biggl( 3K + \frac{K}{3} \biggr)
\\
&= \frac{\sn \frac{K}{3} \cn 3K \dn 3K
+ \sn 3K \cn \frac{K}{3} \dn \frac{K}{3}}
{1 - k^2 \sn^2 3K \sn^2 \frac{K}{3}}
= - \frac{\cn \frac{K}{3}}{\dn \frac{K}{3}}
\end{align*}
since $\sn 3K = - \sn K = -1$; $\cn 3K = - \cn K = 0$, as can
easily be verified.
\end{proof}

\begin{lemma} % 4.6
\label{lm:sn-relation-two}
On the other hand,
$$
\sn \frac{10K}{3}
= -2\,\frac{\sn \frac{K}{3} \dn \frac{K}{3} \cn \frac{K}{3}}
{1 - k^2 \sn^4 \frac{K}{3}}\,.
$$
\end{lemma}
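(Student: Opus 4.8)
The plan is to collapse $\sn\frac{10K}{3}$ to a single duplication of the Addition Theorem. First I would observe that $\frac{10K}{3}=4K-\frac{2K}{3}$, so that the $4K$\nobreakdash-periodicity of $\sn$ from Theorem~\ref{th:period-properties}, together with the fact that $\sn$ is an odd function, gives at once $\sn\frac{10K}{3}=\sn\bigl(-\frac{2K}{3}\bigr)=-\sn\frac{2K}{3}$. No other period identity is needed for this step, so it rests entirely on results already stated.

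Next I would apply the Addition Theorem \eqref{eq:aat} with $u=v=\frac{K}{3}$. The numerator $\sn v\,\cn u\,\dn u+\sn u\,\cn v\,\dn v$ then collapses to $2\sn\frac{K}{3}\cn\frac{K}{3}\dn\frac{K}{3}$, and the denominator $1-k^2\sn^2 u\,\sn^2 v$ becomes $1-k^2\sn^4\frac{K}{3}$, so that $\sn\frac{2K}{3}=\dfrac{2\sn\frac{K}{3}\cn\frac{K}{3}\dn\frac{K}{3}}{1-k^2\sn^4\frac{K}{3}}$. Combining this with the sign from the first step produces exactly the asserted identity.

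An equivalent route, if one prefers to stay ``inside'' the interval $[-K,K]$ on which $\cn$ and $\dn$ were defined with positive square roots, is to write $\frac{10K}{3}=\frac{5K}{3}+\frac{5K}{3}$, note that $\frac{5K}{3}=2K-\frac{K}{3}$, use the half\nobreakdash-period relations $\sn(2K-w)=\sn w$, $\cn(2K-w)=-\cn w$, $\dn(2K-w)=\dn w$ (which follow from $\sn(u+2K)=-\sn u$, $\cn(u+2K)=-\cn u$, $\dn(u+2K)=\dn u$ and the parity of the three functions, exactly as in the proof of Theorem~\ref{th:period-properties}) to replace $\sn\frac{5K}{3},\cn\frac{5K}{3},\dn\frac{5K}{3}$ by $\sn\frac{K}{3},-\cn\frac{K}{3},\dn\frac{K}{3}$, and then apply the same duplication formula to $2\cdot\frac{5K}{3}$.

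There is no real obstacle here: the only thing to watch is the bookkeeping of the signs introduced by the quarter\nobreakdash- and half\nobreakdash-period shifts, and the harmless observation that the positive\nobreakdash-branch conventions for $\cn\frac{K}{3}$ and $\dn\frac{K}{3}$ are legitimate since $0<\frac{K}{3}<K$. Everything else is a one\nobreakdash-line substitution into \eqref{eq:aat}.
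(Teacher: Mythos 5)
Your proposal is correct and follows essentially the same route as the paper: reduce $\sn\frac{10K}{3}$ modulo the period $4K$ to $-\sn\frac{2K}{3}$ and then apply the duplication formula $\sn 2u = \frac{2\sn u\cn u\dn u}{1-k^2\sn^4 u}$, which, as you note, is just the Addition Theorem \eqref{eq:aat} with $u=v=\frac{K}{3}$. The only cosmetic difference is that you invoke Theorem~\ref{th:period-properties} and oddness directly, whereas the paper gets the same reduction by plugging $4K$ into the addition formula; the content is identical.
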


\begin{proof}
\begin{align*}
\sn \frac{10K}{3} &= \sn\biggl( 4K - \frac{2K}{3} \biggr)
\\
&= \frac{\sn\bigl(-\frac{2K}{3}\bigr) \cn 4K \dn 4K
+ \sn 4K \cn\bigl(-\frac{2K}{3}\bigr) \dn\bigl(-\frac{2K}{3}\bigr)}
{1 - k^2 \sn^2 4K \sn^2\bigl( -\frac{2K}{3} \bigr)}
\\
&= - \sn\biggl( -\frac{2K}{3} \biggr)
\\
&= -2\,\frac{\sn \frac{K}{3} \dn \frac{K}{3} \cn \frac{K}{3}}
{1 - k^2 \sn^4 \frac{K}{3}}
\end{align*}
since 
$$
\sn 4K = 0, \quad 
\sn 2u = 2\,\frac{\sn u \cn u \dn u}{1 - k^2 \sn^4 u}\,,
$$
as can easily be verified.
\end{proof}

Equating the two expressions and solving for $k^2$ we obtain
\begin{align*}
k^2 
&= \frac{1 - 2 \sn \frac{K}{3} \dn^2 \frac{K}{3}}{\sn^4 \frac{K}{3}}
\\
%%% maybe a step missing here
&= \frac{2 - \sqrt{3}}{(\sqrt{3} - 1)^4} = \frac{2 + \sqrt{3}}{4}\,.
\end{align*}

Conversely, taking $k^2$ given in the theorem gives us
$\sn \frac{K}{3} = \sqrt{3} - 1$.

This completes the proof of Theorem~\ref{th:choreography}.
% of Fujiwara, Fukuda and Ozaki.
\end{proof}

%%%%%%%%%%%%%%%%%%%%%%%%%%%%%%%%%%%%%%%%%%

\section{More Ubiquity} % 5
\label{sec:ubiquity}

Space prevents us from developing the relation between Legendre's
singular modulus and \emph{the probability that a random walk on a
cubic lattice returns to its origin} (but see \cite{Zucker} and the
references given there), a result that started in 1921 (when Polya
proved that the probability is \emph{not}~$1$ but without offering a numerical value) and was completed by
Joyce in 2003. We will simply state the result.

Let
\begin{equation*}
W := \frac{1}{\pi^3} \int_0^\pi \int_0^\pi \int_0^\pi
\frac{dx\,dy\,dz}{3 - \cos x\cos y - \cos y\cos z - \cos z\cos x}
\\
\end{equation*}
Watson~\cite{Watson} expressed $W$ in terms of Legendre's singular modulus:
\begin{equation*}
W= \frac{\sqrt{3}}{\pi^2}\,
K^2\Biggl( \sqrt{\frac{2 - \sqrt{3}}{4}} \,\Biggr).
\end{equation*}
Let
$$
W^+ :=  \frac{\sqrt{2}}{\pi^3} \int_0^\pi \int_0^\pi \int_0^\pi
\frac{dx\,dy\,dz}{3 + \cos x\cos y + \cos y\cos z + \cos z\cos x}\,.
$$

\begin{thm} % 5.1
\label{th:maybe-return}
The probability $p$ that a random walk returns to the origin on a
cubic lattice $\mathbb{Z}^3$ is
$$
p = 1 - \frac{1}{3W^+} = 0.3405373296\dots
$$
\end{thm}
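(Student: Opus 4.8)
The plan is to connect the probability $p$ to the return quantity $W^+$ via the standard Green's-function (generating-function) argument for random walks, and then to leave the final numerical evaluation of $W^+$ to the cited literature (Watson/Joyce). First I would recall that for a symmetric random walk on $\mathbb{Z}^3$, the probability of ever returning to the origin is $p = 1 - 1/G$, where $G = \sum_{n=0}^\infty p_n(\mathbf{0})$ is the expected number of visits to the origin and $p_n(\mathbf{0})$ is the probability of being at the origin after $n$ steps. This is the classical renewal identity: if $q$ denotes the probability of ever returning, then the number of visits is geometric with parameter $q$, so $G = 1/(1-q)$, whence $q = 1 - 1/G$. So the entire content of the theorem is the identification $G = 3W^+$.

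Next I would represent $G$ by Fourier inversion on the torus. For the nearest-neighbour walk on the (body-centred or appropriately normalized) cubic lattice underlying $W^+$, the structure function is $\lambda(x,y,z) = \tfrac{1}{3}\bigl(\cos x\cos y + \cos y\cos z + \cos z\cos x\bigr)$, so that
$$
p_n(\mathbf{0}) = \frac{1}{\pi^3}\int_0^\pi\!\!\int_0^\pi\!\!\int_0^\pi \lambda(x,y,z)^n \, dx\,dy\,dz,
$$
using the evenness of the cosines to fold the integration onto $[0,\pi]^3$. Summing the geometric series $\sum_{n\ge 0}\lambda^n = 1/(1-\lambda)$ inside the integral (justified since $|\lambda|<1$ almost everywhere and the partial sums are dominated), I get
$$
G = \frac{1}{\pi^3}\int_0^\pi\!\!\int_0^\pi\!\!\int_0^\pi \frac{dx\,dy\,dz}{1 - \tfrac{1}{3}(\cos x\cos y + \cos y\cos z + \cos z\cos x)} = \frac{3}{\pi^3}\int_0^\pi\!\!\int_0^\pi\!\!\int_0^\pi \frac{dx\,dy\,dz}{3 - \cos x\cos y - \cos y\cos z - \cos z\cos x}.
$$
Comparing with the definition of $W^+$ (which carries an extra $\sqrt{2}$ and a $3+$ rather than $3-$ in the denominator), the remaining task is a change of variables — the reflections $x\mapsto \pi - x$ on a subset of the coordinates flip the sign of the cross terms, and matching this against the normalization constant produces exactly $G = 3W^+$, hence $p = 1 - 1/(3W^+)$.

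The main obstacle is the last bookkeeping step: reconciling the precise lattice (simple cubic versus the lattice whose step distribution gives the product-of-cosines structure function) and the factor $\sqrt{2}$ in the definition of $W^+$, since the sign-flip substitutions only account for the $3+$ versus $3-$ discrepancy, not the constant. I expect this to come down to which walk's return probability one is actually computing — the integral $W^+$ corresponds to a walk on a rescaled or face-centred lattice — and to tracking the Jacobian and the normalization of the step distribution carefully; this is the delicate point that Pólya left unresolved and Joyce settled. Once $G = 3W^+$ is established, the numerical value $0.3405373296\dots$ follows from Joyce's closed-form evaluation of $W^+$ (ultimately in terms of $K$ at Legendre-type singular moduli, as in the companion formula for $W$ displayed above), which I would simply quote rather than re-derive.
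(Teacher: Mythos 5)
Your first step---the renewal identity $p = 1 - 1/G$ with $G$ the expected number of visits to the origin, represented by Fourier inversion on the torus---is correct and standard, but note that the paper offers no proof at all of this theorem: it explicitly states the result and refers to Joyce--Delves--Zucker, because the genuinely hard part is exactly the identification $G = 3W^+$ that you treat as ``bookkeeping.'' Two things go wrong there. First, the walk whose return probability is $0.3405373296\dots$ is the nearest-neighbour walk on the simple cubic lattice $\mathbb{Z}^3$, whose structure function is $\lambda = \tfrac{1}{3}(\cos x + \cos y + \cos z)$, giving
$$
G \;=\; \frac{3}{\pi^3}\int_0^\pi\!\!\int_0^\pi\!\!\int_0^\pi \frac{dx\,dy\,dz}{3 - \cos x - \cos y - \cos z}\,.
$$
The structure function you wrote, $\tfrac{1}{3}(\cos x\cos y + \cos y\cos z + \cos z\cos x)$, is that of the \emph{face-centred} cubic walk; running your argument with it produces the FCC return probability ($\approx 0.2563$), not the stated value.

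Second, the reflection trick cannot repair the sign discrepancy: under $x\mapsto \pi-x$ (or any subset of such reflections) each of the three products contains exactly two of the cosines, so an \emph{even} number of the three terms change sign, and the pattern $(-,-,-)$ can never be carried to $(+,+,+)$. The ``minus'' and ``plus'' integrals are genuinely different numbers (numerically $W\approx 0.448$ while $W^+/\sqrt{2}\approx 0.357$), and the factor $\sqrt{2}$ has no Jacobian origin. The equality of the simple cubic Green's function with $3W^+$---that is, with $\sqrt{2}$ times the all-plus-signs analogue of Watson's FCC integral---is precisely the nontrivial content of the theorem; it was proved by Joyce, Delves and Zucker by analytic continuation of the anisotropic face-centred cubic lattice Green's function, not by an elementary change of variables. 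So your outline proves $p = 1 - 1/G$ but leaves the crucial identity $G = 3W^+$ unestablished, and the route you sketch for it would fail; if you do not intend to reproduce the Joyce--Delves--Zucker transformation theory, the honest course (and the paper's) is to cite it for that identity as well as for the numerical value.
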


So the integral which solves the random walk problem for a three
dimensional cubic lattice \emph{is obtained from the integral in
Legendre's relation by changing all the minus signs to plus and
multiplying by $\sqrt{2}$}~\cite{JoyceDZ}.
  
Nor do we have space to prove that the period of a simple pendulum
with amplitude $300^\circ$ is the same as that of a pendulum
\emph{three times as long} and which swings through an amplitude of
$60^\circ$ \cite[p.~98]{Bowman}. Indeed, every singular modulus has a
pendulum interpretation in which a given pendulum is replaced with
another \emph{with the same period but longer length and smaller
amplitude} which allows one to approximate the period with small
amplitude approximations. This is an example of the modern theory of
\emph{renormalization}.

At all events, we see that Legendre's singular modulus is (forgive the
pun) \emph{singularly} ubiquitous, while the wonderful historical
continuity of modern mathematics shows itself once again.

%%%%%%%%%%%%%%%%%%%%%%%%%%%%%%%%%%%%%%%%%%%%%%%%

\section*{Acknowledgements}

We thank Carlo Beenakker for his help in the proof of \eqref{eq:id},
and Joseph C. V\'arilly, and Adrian Barquero for useful comments. Financial support from
the Vicerector\'ia de Investigaci\'on of the University of Costa Rica
is acknowledged.

%%%%%%%%%%%%%%%%%%%%%%%%%%%%%%%%%%%%%%%%%%%%%%%

\end{document}